\newtheorem{thm}{Theorem}[section]
\newtheorem{conj}[thm]{Conjecture}
\newtheorem{prop}[thm]{Proposition}
\newtheorem{quest}[thm]{Question}
\theoremstyle{remark}
\newtheorem*{rem}{Remark}
\newtheorem{example}{Example}
\newcounter{remarkscounter}
\numberwithin{equation}{section}
\newcommand{\A}{\mathbb{A}}
\newcommand{\GL}{\mathrm{GL}}
\newcommand{\SL}{\mathrm{SL}}
\newcommand{\Sym}{\mathrm{Sym}}
\newcommand{\lto}{\longrightarrow}
\newcommand{\CC}{\mathbb{C}}
\newcommand{\la}{\lambda}
\newcommand{\Sla}{\mathbb{S}_{\parallel\la\parallel}}
\newcommand{\quash}[1]{}
\theoremstyle{definition}
\newtheorem{defn}[thm]{Definition}
\numberwithin{equation}{subsection}
\renewcommand{\hat}{\widehat}
\begin{document}
\title[Classical groups with type $B_n$, $C_n$ and $D_{2n}$]{On Classical groups detected by the tensor third representation}
\author{Heekyoung Hahn}
\address{Department of Mathematics\\
Duke University\\
Durham, NC 27708}
\email{hahn@math.duke.edu}

\subjclass[2010]{Primary 11F70;  Secondary 20G05}


\maketitle
\begin{abstract}
Motivated by the Langlands' beyond endoscopy proposal for establishing functoriality, we study the representation $\otimes^3$ in a setting related to the Langlands $L$-functions $L(s,\pi,\,\otimes^3),$ where $\pi$ is a cuspidal automorphic representation of $G$ where $G$ is either $\mathrm{SO}(2n+1)$, $\mathrm{Sp}(2n)$ and $\mathrm{SO}(2n)$. In particular, under what conditions on partitions $\la$, we examine whether or not  $\otimes^3$ detects the subgroups $\mathbb{S}_{[\la]}(G)$ for $G$ with type $B_n$ and $D_{2n}$ or $\mathbb{S}_{\langle\la\rangle}(G)$ for $G$ with type $C_n$. Here $\mathbb{S}_{[\la]}$ and $\mathbb{S}_{\langle\la\rangle}$ are the usual Schur functors associated to the partition $\la$.
\end{abstract}
\tableofcontents

\section{Introduction}

Let $F$ be a number field and let $\A_F$ be adeles of $F$. Let $H$ be a reductive group over $F$. Given a representation $${}^LH\lto \GL_N(\CC),$$ Langlands' functorial conjectures \cite{Langlands_conj} predict there should be a corresponding transfer of automorphic representations of $H(\A_F)$ to automorphic representations of $\GL_N(\A_F)$.

One can ask for a characterization of those automorphic representations in the image. It boils down to understanding how to detect the $L$-parameters such that a $\GL_N(\CC)$-conjugate factors through ${}^LH$. If we let ${}^{\lambda}H$ be the Zariski closure of $\mathrm{Im}({}^LH)$ viewed as a reductive group over $\CC$, then by a theorem of Chevalley \cite{Milne} one knows that there exists a representation $\GL_N \lto \GL(V)$ such that ${}^{\lambda}H$ is the stabilizer of a line in $V$. Therefore the following definition is natural (see \cite{Hahn}):

\begin{defn}
Let $H$ be an irreducible reductive subgroup of $\GL_N$. We say a representation $r:\GL_N \lto \GL(V)$ \textit{detects} $H$ if $H$ fixes a line in $V$.
\end{defn}

\begin{rem}
If $H$ is connected  then $r$ detects $H$ if and only if it detects $H^{\mathrm{der}}$.
\end{rem}

\noindent The following conjecture is the crux of Langlands' beyond endoscopy proposal \cite{Langlands_beyond}, which aims to prove Langlands functoriality in general:

\begin{conj}\label{conj}
Let $\pi$ be a unitary cuspidal automorphic representation of $\GL_N(\A_F).$ If $\pi$ is a functorial transfer from $H$, then $L(s,\pi,r \otimes \chi)$ has a pole at $s=1$ for some character $\chi \in F^{\times} \backslash \A_F^{\times} \to \CC^\times$ whenever $r$ detects ${}^{\lambda}H$.
\end{conj}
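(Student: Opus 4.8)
The plan is to turn the group-theoretic hypothesis that $r$ detects ${}^{\lambda}H$ into an analytic pole statement by running it through the $L$-parameter of $\pi$. Write $r\colon\GL_N\lto\GL(V)$ and suppose ${}^{\lambda}H$ fixes a line $\ell\subset V$; then ${}^{\lambda}H$ acts on $\ell$ through a character $\eta\colon{}^{\lambda}H\to\GL_1(\CC)$, so $\eta$ is a subrepresentation of the restriction $r|_{{}^{\lambda}H}$. By hypothesis $\pi$ is a functorial transfer from $H$, so its $L$-parameter $\phi_\pi$ (of the conjectural Langlands group $L_F$, or of $W_F$ in the cases where it is available) factors, after $\GL_N(\CC)$-conjugation, through ${}^{L}H$ and hence through its Zariski closure ${}^{\lambda}H$. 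Composing, the one-dimensional representation $\eta\circ\phi_\pi$ occurs inside $r\circ\phi_\pi$. Since $\eta\circ\phi_\pi$ is a character of $L_F$, class field theory identifies it with a unitary Hecke character $\chi_0\in F^{\times}\backslash\A_F^{\times}\to\CC^{\times}$, and I then set $\chi=\chi_0^{-1}$.

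With this choice $(r\otimes\chi)\circ\phi_\pi$ contains the trivial representation of $L_F$, and therefore
\[
 L(s,\pi,r\otimes\chi)=\zeta_F(s)\cdot L(s,\rho),
\]
where $\zeta_F$ is the Dedekind zeta function and $\rho$ is the complementary summand of $(r\otimes\chi)\circ\phi_\pi$. More precisely, the expected order of the pole of $L(s,\pi,r\otimes\chi)$ at $s=1$ is the multiplicity of the trivial representation in $(r\otimes\chi)\circ\phi_\pi$, and the preceding paragraph exhibits that multiplicity as at least $1$. Hence, granting the two analytic inputs below, $L(s,\pi,r\otimes\chi)$ has a pole at $s=1$.

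The analytic half is the \emph{main obstacle}, and it is exactly where the conjecture meets the beyond-endoscopy program in full: one needs (i) meromorphic continuation of $L(s,\pi,r\otimes\chi)$ to a neighbourhood of $s=1$, and (ii) holomorphy and non-vanishing of the complementary factor $L(s,\rho)$ at $s=1$, so that the simple pole of $\zeta_F(s)$ is not cancelled. In general (i) is essentially equivalent to functoriality for $r\circ\phi_\pi$, and (ii) is a non-vanishing statement on the edge of the critical strip of Siegel type. I would therefore first establish the conjecture conditionally: assuming functoriality for the irreducible constituents of $r|_{{}^{\lambda}H}$, the representation $\rho$ is automorphic and isobaric, so $L(s,\rho)$ is a product of standard $L$-functions $L(s,\sigma)$ for cuspidal $\sigma$ on various $\GL_m(\A_F)$, each holomorphic and non-vanishing at $s=1$ by Jacquet--Shalika, whence both (i) and (ii) follow. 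Unconditionally one is then limited to those $r$ for which $L(s,\pi,r)$ already has known analytic properties --- Rankin--Selberg products, exterior and symmetric squares, and cases reachable by the Langlands--Shahidi method, the doubling method, or converse theorems; checking that $\otimes^3$, or its relevant subconstituents when restricted along a classical-group parameter, falls into this list in the regime under study is the remaining unconditional work.

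Finally, I would emphasize that the hypothesis ``$r$ detects ${}^{\lambda}H$'' is precisely the object the rest of the paper controls: for $r=\otimes^3$ the existence of the fixed line $\ell$ amounts, up to the action of the (finite) center, to the occurrence of $\mathbb{S}_{[\la]}V$ (resp.\ $\mathbb{S}_{\langle\la\rangle}V$) inside its own tensor square as a representation of the relevant classical group, which one decides by a branching/plethysm computation built from the Littlewood--Richardson rule and Weyl's traceless Schur functors. Supplying such families $(\la,G)$ is the concrete contribution here, while the analytic half of Conjecture~\ref{conj} remains open in general.
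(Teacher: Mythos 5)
This statement is a conjecture --- it is Langlands' beyond endoscopy conjecture, which the paper explicitly presents as ``the crux of Langlands' beyond endoscopy proposal'' and does not prove (nor does anyone know how to prove it in general). So there is no proof in the paper to compare yours against, and your text should not be read as a proof of the statement.

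That said, what you have written is the standard and correct \emph{heuristic} for why the conjecture is expected: the fixed line gives a character $\eta$ of ${}^{\lambda}H$, the parameter $\phi_\pi$ factors through ${}^{L}H$, so $(r\otimes\chi)\circ\phi_\pi$ contains the trivial representation for a suitable Hecke character $\chi$, and the order of the pole of $L(s,\pi,r\otimes\chi)$ at $s=1$ should equal that multiplicity. You are right to flag the two analytic inputs as the obstruction, but be aware the gaps are even more foundational than you state: the argument presupposes the existence of the global Langlands group $L_F$ and an honest $L$-parameter $\phi_\pi$ for $\pi$, and it presupposes that the automorphic $L$-function $L(s,\pi,r\otimes\chi)$ agrees with the $L$-function of $(r\otimes\chi)\circ\phi_\pi$ --- both of which are themselves deep conjectures, not merely analytic continuation and non-vanishing issues. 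Your conditional version (assuming functoriality for the constituents of $r|_{{}^{\lambda}H}$ and invoking Jacquet--Shalika) is sound as a conditional statement. Your closing paragraph correctly identifies the paper's actual contribution: it takes Conjecture~\ref{conj} as motivation and addresses only the group-theoretic question of when $\otimes^3$ detects $\mathbb{S}_{\parallel\la\parallel}(G)$, via Proposition~\ref{key} and the Newell--Littlewood coefficient $N^{\la}_{\la\la}$.
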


Motivated by Langlands' proposal, in the recent paper \cite{Hahn}, the author proposed the following concrete question in algebraic group theory:
 
\begin{quest}\label{Q}
Given a representation 
$$
r:\GL_N \lto \GL(V)
$$
which algebraic subgroups of $\GL_N$ are detected by $r$?
\end{quest}

If $r=\Sym^2$, one knows that every irreducible reductive subgroup of $\GL_N$ detected by $r$ is conjugate to a subgroup of $\mathrm{GO}_N$. Moreover, in this case the Conjecture \ref{conj} is proven by work of Arthur \cite{Arthur}, work of Cogdell, Kim, Piatetski-Shapiro and Shahidi \cite{CKPSS} and work of Ginzburg, Rallis and Soundry \cite{GRS}.  There is a similar statement for $r=\Lambda^2$.  Thus the case $r=\otimes^2$ is relatively well-understood.

Apart from this special case, explicit results are hard to come by. We mention one case that was discussed in a recent paper of Getz and Klassen \cite{GK}.
Let 
$$
RS: \GL_m\times\GL_m \hookrightarrow \GL_{m^2}
$$
be the representation induced by the usual tensor product. Then it is known that $RS(\GL_m\times\GL_m)$ is detected by $\mathrm{Sym}^m$ (see \cite{GK} for instance). Moreover, the analytic properties of the relevant Langlands  $L$-function is well understood via the Rankin-Selberg theory, although we do not know its automorphicity:

\begin{rem}
Let $\pi_1$ and $\pi_2$ be cuspidal automorphic representations of $\GL_m(\A_F)$.   Then one knows that
$$
L(s,\pi_1 \times \pi_2,RS)=L(s,\pi_1 \times \pi_2).
$$
Here the left is the Langlands $L$-function and the right is the usual Rankin-Selberg $L$-function.
\end{rem}

One then ask, for instance, whether $RS(\GL_m\times\GL_m)$ is the only maximal subgroup of $\GL_{m^2}$ that is detected  by $\mathrm{Sym}^m$. It turns out that is not the case. In fact, the author proved \cite{Hahn} that subgroup $H:=\mathrm{Sym}^{n-1}(\SL_2)$ of $\GL_n$ is detected by $\Sym^3$ if and only if $n\equiv 1\pmod{4}$. Moreover, the author showed that if $m>\ell(\la)$ then the representation $\otimes^3$ does not detect  $\mathbb{S}_{\lambda}(\SL_m)$. Here $\mathbb{S}_{\lambda}$ is the Schur functor associated to a partition $\la$ and $\ell(\la)$ is the number of nonzero parts of $\la$.  

Motivated both by Rankin-Selberg theory and the fact that $\otimes^3$ is some sense the next natural case to consider after $\otimes^2$, we continue to investigate what groups are detected by the representation $\otimes^3$.

For the remainder of the paper we make the following assumption on $G$:
\begin{itemize}
\item[(A1)] The algebraic group $G$ is one of the classical groups $\mathrm{SO}(2n+1)$, $\mathrm{Sp}(2n)$ and $\mathrm{SO}(2n)$, where for $\mathrm{SO}(2n)$, we further assume $n$ is even.  
\end{itemize}
It is well-known \cite[Lemma 4.2]{Seitz} that all irreducible modules of $G$ are then self-dual.  This is false for $\mathrm{SO}(2n)$ when $n$ is odd, which is why we assume that $n$ is even.
We investigate which irreducible subgroups of $\GL_N$ are detected by the representation $\otimes^3: \GL_N \lto \GL_{N^3}$.

Throughout the paper, we use the following notation for a partition $\la$ with at most $n$ parts:
$$\la=(\la_1, \la_2, \dots, \la_n),\quad \la_1\geq\la_2\geq \dots\geq \la_n\geq 0.
$$
Let $\Sla(G)$ be the irreducible subgroup of $\GL_N$ obtained by applying the Schur functor associated to the highest weight $\la$.  The precise construction depends on the type of $G$ and is reviewed in \S \ref{prelim} below. We make the following additional assumption on $\la$:
\begin{itemize} 
\item[(A2)] For $G=\mathrm{SO}(2n)$, we assume that $\lambda_n=0$.
\end{itemize}
We make this assumption to ensure that the representation $\Sla$ of $G$ is irreducible.

If $\la$ is a partition of an odd number, we obtain the following complete answer:

\begin{thm}\label{thm-odd-intro}
Let $\la$ be a partition of an odd number. Then the representation $\otimes^3$ does not detect $\Sla(G)$.
\end{thm}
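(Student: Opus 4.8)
The plan is to recast ``$\otimes^3$ detects $\Sla(G)$'' as the vanishing of $(V_\la^{\otimes 3})^G$, where $V_\la$ is the irreducible $G$-module of highest weight $\la$ carrying the embedding $\Sla(G)\hookrightarrow\GL_N$, and then to force this vanishing from the parity of $|\la|:=\sum_i\la_i$. Unwinding the definition, $\otimes^3$ detects $\Sla(G)$ if and only if $\Sla(G)$ fixes a line in $(\CC^N)^{\otimes 3}$, i.e.\ if and only if $(V_\la^{\otimes 3})^G\neq 0$; and since assumption (A1) together with \cite[Lemma 4.2]{Seitz} makes $V_\la$ self-dual, we have $(V_\la^{\otimes 3})^G\cong\mathrm{Hom}_G(V_\la,V_\la\otimes V_\la)$, so the question becomes whether $V_\la$ occurs in $V_\la\otimes V_\la$. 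Passing through self-duality is what exposes the parity: the multiplicity of $V_\la$ in $V_\la\otimes V_\la$ is governed (in the stable range) by the orthogonal/symplectic Littlewood--Richardson number $N^\la_{\la\la}=\sum_{\alpha,\beta,\gamma}c^\la_{\alpha\beta}c^\la_{\alpha\gamma}c^\la_{\beta\gamma}$, and a nonzero summand would require $|\alpha|+|\beta|=|\alpha|+|\gamma|=|\beta|+|\gamma|=|\la|$, hence $2(|\alpha|+|\beta|+|\gamma|)=3|\la|$, which is impossible when $|\la|$ is odd.

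The clean and unconditional way to run this for $G$ of type $C_n$ and $D_{2n}$ is through the central element $-I$ of $G$. Realizing $V_\la$ inside $(\CC^N)^{\otimes|\la|}$ by Weyl's trace-free construction (\S\ref{prelim}), $-I$ acts on $V_\la$ by the scalar $(-1)^{|\la|}=-1$, hence on $V_\la^{\otimes 3}$ by $(-1)^3=-1$; as $-I\in G$ for $G=\mathrm{Sp}(2n)$ and for $G=\mathrm{SO}(2n)$ with $n$ even, the scalar $-\mathrm{id}_{V_\la}$ lies in $\Sla(G)$, and it fixes no line in $V_\la^{\otimes 3}$. Equivalently, for these types the root lattice $Q$ consists of the integral vectors of even coordinate-sum, so $|\la|$ odd gives $3\la\notin Q$, and the zero weight cannot occur among the weights of $V_\la^{\otimes 3}\subseteq 3\la+Q$. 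If $\Sla(G)$ for $G=\mathrm{SO}(2n+1)$ is read with its orthogonal structure, so that it contains the image of $-I\in\mathrm{O}(2n+1)$, the same one-line argument applies verbatim.

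The main obstacle is the remaining reading of the case $G=\mathrm{SO}(2n+1)$, whose centre is trivial: then $-I$ lies only in $\mathrm{O}(2n+1)$, so the element argument gives only $(V_\la^{\otimes 3})^{\mathrm{O}(2n+1)}=0$, and one must still rule out invariants fixed by $\mathrm{SO}(2n+1)$ but not by $\mathrm{O}(2n+1)$. I would use the first fundamental theorem for $\mathrm{SO}(2n+1)$: every invariant of the odd tensor power $(\CC^{2n+1})^{\otimes 3|\la|}\supseteq V_\la^{\otimes 3}$ is a total contraction built from the symmetric form and the volume form $\epsilon\in\Lambda^{2n+1}\CC^{2n+1}$; those using no $\epsilon$ vanish because $3|\la|$ is odd, and one must show that any invariant involving $\epsilon$ dies on projection onto the trace-free summand $V_\la^{\otimes 3}$. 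The mechanism here is that inside a single $V_\la$-factor the antisymmetrizing $\epsilon$ can occupy at most one slot per row of $\la$, hence at most $\ell(\la)$ of the $|\la|$ slots, so $2n+1\leq 3\ell(\la)$ is already forced, and even when this holds the symmetrization over the rows in each of the three factors together with trace-freeness should annihilate the tensor. Making precise this interaction of $\epsilon$ with the Young symmetrizer and with the trace removal defining $V_\la$ is the step I expect to be the crux; it is also where low-rank behaviour intervenes (for instance $\Lambda^3\CC^3$ already fixes a line in $(\CC^3)^{\otimes 3}$ under $\mathrm{SO}(3)$), so the exact constraints on $n$ relative to $\la$, as fixed by the construction of $\mathbb{S}_{[\la]}(\mathrm{SO}(2n+1))$ in \S\ref{prelim}, will be needed here.
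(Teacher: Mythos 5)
Your first paragraph is precisely the paper's proof: Proposition \ref{key} reduces detection by $\otimes^3$ to $N^{\la}_{\la\la}>0$, and a nonzero summand of \eqref{GenMul} forces $|\alpha|+|\beta|=|\alpha|+|\gamma|=|\beta|+|\gamma|=|\la|$, hence $2(|\alpha|+|\beta|+|\gamma|)=3|\la|$, which is impossible for $|\la|$ odd. If you grant \eqref{GenLR} and Proposition \ref{key} at face value, as the paper does, you are finished after that paragraph and the rest of your write-up is not needed.

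That said, the hesitation behind the rest of your proposal is well founded and points at something the paper passes over in silence: \eqref{GenLR} with Newell--Littlewood multiplicities is a stable-range statement, and outside stability the true multiplicity of $\Sla(V)$ in $\Sla(V)\otimes\Sla(V)$ is governed by modification rules rather than by $N^{\la}_{\la\la}$. Your parenthetical example is exactly the point where this bites: for $G=\mathrm{SO}(3)$ and $\la=(1)$ the line $\Lambda^3\CC^3\subset(\CC^3)^{\otimes 3}$ is fixed by $\mathrm{SO}(3)$, so $\otimes^3$ does detect $\mathbb{S}_{[(1)]}(\mathrm{SO}(3))$ even though $|\la|=1$ is odd (equivalently, $\Lambda^2\CC^3\cong\CC^3$ as $\mathrm{SO}(3)$-modules, so $V_{(1)}$ occurs in $V_{(1)}^{\otimes 2}$ although $N^{(1)}_{(1)(1)}=0$). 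Your central-element argument is a complete, unconditional repair for $\mathrm{Sp}(2n)$ and for $\mathrm{SO}(2n)$ with $n$ even: $-I\in G$ acts on $V_\la^{\otimes 3}$ by $(-1)^{3|\la|}=-1$, so no line is fixed; this is a genuinely different and strictly stronger argument than the paper's, as it needs no stability hypothesis. For $\mathrm{SO}(2n+1)$, however, your first-fundamental-theorem sketch is not a proof --- you say so yourself --- and it cannot be completed in general, since the statement is simply false there in low rank; what one can salvage is the theorem under an explicit stability assumption (e.g.\ $2\ell(\la)\le n$, so that \eqref{GenLR} applies literally, or a bound like $2n+1>3\ell(\la)$ ruling out any occurrence of the volume form). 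The same caveat should be attached to the paper's own proof of this theorem and of Proposition \ref{key}.
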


For a partition $\la$ of an even number, we obtain the following results depending on $\la$. 

\begin{thm}\label{thm-even-intro}
Let $\la$ be a partition of an even number. If $\la$ satisfies
\begin{enumerate}
\item[(a)]{all parts are even, or}
\item[(b)]{it has even $\ell(\la)$ and all parts are distinct and odd, or}
\item[(c)]{$\la$ is a hook partition, or}
\item[(d)]{$\la$ is a rectangular partition,}
\end{enumerate}
then the representation $\otimes^3$ detects $\Sla(G)$.
\end{thm}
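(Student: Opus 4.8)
The plan is to reduce the statement to a question about the tensor square of the defining module and then settle each family by an explicit Littlewood--Richardson computation. Write $V_\la$ for the irreducible $G$-module of highest weight $\la$, so that $\Sla(G)$ is the image of $G$ in $\GL_N=\GL(V_\la)$ and $\otimes^3$ detects $\Sla(G)$ exactly when $(V_\la^{\otimes 3})^G\neq 0$. By assumption (A1) the module $V_\la$ is self-dual, so
\[
(V_\la^{\otimes 3})^G\;\cong\;\mathrm{Hom}_G(V_\la^{*},\,V_\la\otimes V_\la)\;\cong\;\mathrm{Hom}_G(V_\la,\,V_\la\otimes V_\la),
\]
and hence $\otimes^3$ detects $\Sla(G)$ if and only if $V_\la$ occurs as a constituent of $V_\la\otimes V_\la$. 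Since self-duality holds uniformly for the three types in (A1), this reformulation --- and therefore the argument --- is insensitive to the type of $G$; and as we want a positive result, it is enough to exhibit one $G$-equivariant copy of $V_\la$ inside $V_\la\otimes V_\la$.

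Next I would invoke the combinatorial rule for tensor products of irreducible modules of the classical groups (Littlewood, Koike--Terada; see also \S\ref{prelim}): in the relevant range the multiplicity of $V_\nu$ in $V_\la\otimes V_\mu$ is the Newell--Littlewood number
\[
N^{\nu}_{\la,\mu}\;=\;\sum_{\alpha,\beta,\gamma}c^{\la}_{\alpha\beta}\,c^{\mu}_{\alpha\gamma}\,c^{\nu}_{\beta\gamma},
\]
with the $c$'s Littlewood--Richardson coefficients. Combining this with the previous paragraph (take $\la=\mu=\nu$), the theorem reduces to finding partitions $\alpha,\beta,\gamma$ of $|\la|/2$ with $c^{\la}_{\alpha\beta}$, $c^{\la}_{\beta\gamma}$ and $c^{\la}_{\gamma\alpha}$ all positive; in particular $|\la|$ must be even (consistent with Theorem~\ref{thm-odd-intro}), since $c^{\la}_{\alpha\beta}\neq 0$ forces $|\alpha|+|\beta|=|\la|$, and similarly for the other two pairs. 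In each of the four families it will be enough to exhibit a single partition $\mu\vdash|\la|/2$ with $c^{\la}_{\mu\mu}>0$, for then the $(\alpha,\beta,\gamma)=(\mu,\mu,\mu)$ term gives $N^{\la}_{\la,\la}\geq(c^{\la}_{\mu\mu})^{3}>0$.

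I would then dispatch the four families by writing down such a $\mu$. If all parts of $\la$ are even, write $\la=2\rho$ and take $\mu=\rho$, for which $c^{2\rho}_{\rho\rho}=1$. A rectangle $\la=(c^{r})$ with $cr$ even either has $c$ even, in which case it falls under the previous case with $\rho=((c/2)^{r})$, or has $r$ even, in which case $\mu=(c^{r/2})$ works because $c^{(c^{r})}_{(c^{r/2}),(c^{r/2})}=1$ (two copies of $\mu$ stacked vertically). For a hook $\la=(a,1^{b})$ with $a+b$ even, take $\mu=(\lceil a/2\rceil,1^{\lfloor b/2\rfloor})$; here $\la/\mu$ is the disjoint union of a horizontal strip at the right end of the arm and a vertical strip at the foot of the leg, so one fills the horizontal strip with $1$'s, runs an increasing column-strict sequence down the vertical strip, and checks that the reverse reading word is a lattice word, giving $c^{\la}_{\mu\mu}>0$. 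Finally, if $\la$ has even length $2m$ and distinct odd parts, write $\la_i=2c_i+1$ with $c_1>\cdots>c_{2m}\geq 0$ and take $\mu=(c_1+1,\dots,c_m+1,\,c_{m+1},\dots,c_{2m})$, a partition of $|\la|/2$; one checks $c^{\la}_{\mu\mu}>0$ by producing an explicit semistandard filling of $\la/\mu$ of content $\mu$ whose reverse reading word is a lattice word.

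The main obstacle is the last family. In the hook and rectangle cases $\la/\mu$ degenerates into one or two strips, so the Littlewood--Richardson verification is essentially Pieri's rule, whereas for distinct odd parts $\la/\mu$ is genuinely two-dimensional and the delicate point is to arrange the fillings so that column-strictness below the two ``blocks'' of $\mu$ stays compatible with the Yamanouchi condition on the reading word. (One could instead pass through the classical bijection between partitions with distinct odd parts and self-conjugate partitions --- under which $\ell(\la)$ even corresponds to the Durfee square having even side --- but the direct construction seems shortest.) A secondary point is that the Newell--Littlewood formula describes $V_\la\otimes V_\la$ only in a stable range of ranks; one should check that under (A1)--(A2) the modification rules do not remove the copy of $V_\la$, or, what amounts to the same, observe that the invariant cubic tensor produced above is assembled from contractions against the invariant form of $G$ together with an explicit Littlewood--Richardson datum and hence exists at every admissible rank.
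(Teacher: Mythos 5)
Your reduction is exactly the paper's: self-duality of $\Sla(V)$ turns detection by $\otimes^3$ into $N^{\la}_{\la\la}>0$, and the Newell--Littlewood formula turns that into exhibiting Littlewood--Richardson tableaux; your witnesses $\mu$ coincide with the paper's choices (your $\mu$ for case (b) and for the odd-arm hook is precisely the paper's $\alpha=\gamma$), and your observation that a single $\mu$ with $c^{\la}_{\mu\mu}>0$ suffices, since the $(\mu,\mu,\mu)$ term contributes $(c^{\la}_{\mu\mu})^3$, is a clean simplification of the paper's three separate verifications. Your direct "stack two half-rectangles" argument for an even number of rows also replaces the paper's appeal to $c^{\la}_{\mu\nu}=c^{\la'}_{\mu'\nu'}$ and is if anything more elementary. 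The one place where you assert rather than prove is the crux of case (b): the filling of $\la/\mu$ of content $\mu$ for distinct odd parts. It does exist --- fill row $j$ of the content diagram with $j$'s, place only $\tfrac{\la_1-1}{2}$ of the $\tfrac{\la_1+1}{2}$ ones in the first skew row, let the leftover $1$ overflow to the start of row $2$, complete row $2$ with $2$'s, let the surplus $2$'s overflow to row $3$, and so on; the overflow of a single entry per row keeps columns strict, and the reading word is a lattice word because each row contributes its own letter before any of the next --- so your claim is correct, but as written it is a promissory note where the paper supplies the construction (its "third case" in Proposition \ref{prop-dist} and tableau $T_3$ of Example \ref{ex2}). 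Your closing caveat about the stable range of the Newell--Littlewood rule is well taken; the paper does not address it either, so you are not behind it on that point.
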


Before outlining the paper we comment on these results from the perspective of Langlands' beyond endoscopy proposal.  
Essentially they give some feeling of how much more complicated the structure of the beyond endoscopy proposal is than the theory of endoscopy.  In the theory of endoscopy one writes the trace formula in terms of stable orbital integrals on endoscopic groups.  In the beyond endoscopy proposal Langlands proposes that one writes limiting forms of the trace formula in terms of groups that are detected by a particular representation.  It is evident from the theorems above that this set is much more complicated than the set of endoscopic groups of a given group.  On the other hand there may be simplifications that can be made in certain situations.  For example, all of the groups $\mathbb{S}_{\parallel\lambda\parallel}(G) \leq \GL_N$ considered above are conjugate to subgroups of $\mathrm{O}_N$ or $\mathrm{Sp}_N$ since the representations $\mathbb{S}_{\parallel\lambda\parallel}$ are self-dual.  Thus for some purposes it might be possible to sieve them all out at the outset by restricting to non-self dual representations.

We close our introduction by outlining the paper. In \S \ref{prelim}, we review some basic facts on partitions, the Littlewood-Richardson coefficients and the groups $\mathrm{SO}(2n+1)$, $\mathrm{Sp}(2n)$ and $\mathrm{SO}(2n)$. In \S \ref{third}, we prove a key proposition showing a necessary and sufficient condition for detection by $\otimes^3$.  In \S \ref{odd}, we prove that the representation $\otimes^3$ does not detect $\Sla(G)$ for partitions $\la$ of the odd numbers. In the last section \ref{even}, we prove Theorem \ref{thm-even-intro} for partitions $\la$ of the even numbers.


\section{Preliminaries}\label{prelim}

\subsection{Partitions}

In this section, we recall some basic notion of partitions. Let $\lambda$ be a partition with at most $n$ parts written as
\begin{equation}\label{partition-nparts}
\lambda=(\lambda_1, \lambda_2, \dots , \lambda_n), \quad \lambda_1\geq\lambda_2\geq \cdots\geq \lambda_n\geq 0,
\end{equation}
and let $|\lambda|=\sum_i\la_i$ be the number partitioned by $\lambda$. Moreover, denote by $\ell(\la)$ the number of parts of a partition $\la$.

The Young diagram of the partition $\lambda=(\lambda_1, \lambda_2, \dots , \lambda_n)$ is an array of boxes arranged in left-justified horizontal rows where for each $i$, $\la_i$ is the number of boxes in the $i$th row \cite{Fulton}. For example, the Young diagram of the partition $\la=(4,4,3,1, 1)$ is
$$
\yng(4,4,3,1,1)
$$

For positive integers $a_i$, $\la_i$ with $1\leq i\leq k$ and $\la_i>\la_{i+1}$ for all $1\leq i\leq k-1$, the notation $\la=(\la_1^{a_1}, \cdots,\la_k^{a_k})$ is shorthand for the partition of $\sum_{i=1}^ka_i\la_i$ given by
$$
\la=(\underbrace{\la_1, \dots, \la_1}_{a_1 \text{ times }}, \dots, \underbrace{\la_k, \dots, \la_k}_{ a_k \text{ times }}).
$$ 
Therefore the above partition $\la=(4,4,3,1, 1)$ can be shorten to $\la=(4^2, 3, 1^2)$. Note that in this example, $|\la|=13$ and $\ell(\la)=5$ which corresponds the number of rows of its Young diagram.

The conjugate of a partition $\la$ is the partition of $|\la|$ whose Young diagram is obtained by reflecting the Young diagram of $\la$ about the diagonal so that rows become columns and columns become rows. We write it as $\la'$. In above example $\la=(4,4,3,1,1)$, the conjugate partition of $\la$ is
$\la'=(5,3,3,2)$ and its Young diagram is
$$
\yng(5,3,3,2)
$$
Many interesting partition identities were proved by using the Young diagram together with notion of conjugate partitions, see for example \cite{Andrews}.

\subsection{Littlewood-Richardson coefficients}

In this section, we follow the exposition in \cite{FH} as we recall basic facts: For any $n$ dimensional vector space $V$ over $\CC$ and any partition  $\lambda$ with at most $n$ parts as in \eqref{partition-nparts}, we can apply the Schur functor $\mathbb{S}_{\lambda}$ to $V$ to obtain a representation $\mathbb{S}_{\lambda}(V)$ for $\GL_n$. It remains irreducible when restrict to $\SL_n$. In particular it determines an irreducible representations of the Lie algebra $\frak{sl}_n$ (see \cite[Proposition 15.15]{FH}).

By the Littlewood-Richardson formula (compare with \cite[Exercise 15.23]{FH}), one knows the decomposition of a tensor product of any two irreducible representations of $\mathfrak{sl}_n$, namely
\begin{equation}\label{LR}
\mathbb{S}_{\lambda}(V) \otimes \mathbb{S}_{\mu}(V)=\bigoplus_{\nu}c^{\nu}_{\lambda\mu}\mathbb{S}_{\nu}(V).
\end{equation}
Here $\nu$ is a partition of $|\lambda|+|\mu|$ and the coefficient $c^{\nu}_{\lambda\mu}$ are given by the Littlewood-Richardson rule. The constant $c^{\nu}_{\lambda\mu}$ is the number of ways to obtain the partition $\nu$ from the partition $\la$ by ``adding" the partition $\mu$ following the Littlewood-Richardson rule.

As our proofs rely heavily on this rule, we will briefly state it by following the exposition in \cite[\S 4]{HL}. Let $\la=(\la_1, \la_2, \dots, \la_n)$ and let $
\mu=(\mu_1, \mu_2, \dots, \mu_n)$ be partitions defined as in \eqref{partition-nparts}.
 One writes $\la\subset \nu$ if the Young diagram of $\la$ sits inside the Young diagram of $\nu$. In other words, $\la_i\leq \nu_i$ for all $i$. If $\la\subset \nu$, put the Young diagram of $\la$ on the Young diagram of $\nu$ with the same top-left corner and remove $\la$ out of $\nu$. That way, we obtain the \textit{skew diagram} $\nu-\la$. Put a positive number in each box $\nu-\la$, then it becomes a \textit{skew tableau} with the \textit{shape} $\nu-\la$. If the entries of this skew tableau are taken from $\{1, 2, \dots, n\}$ and $\mu_j$ of them are $j$ for each $j=1, 2, \dots, m$, then the \textit{content} of this skew tableau becomes $\mu=(\mu_1, \dots, \mu_n)$. For a skew tableau $T$, we define the \textit{word} of $T$ by the sequence $w(T)$ of positive integers obtained by reading the entries of $T$ from top to bottom and right to left in each row.

For example,
\begin{equation}\label{First-LRT}
T=\ytableausetup{centertableaux}
\ytableaushort
{\none\none\none 1 1 1, \none\none 1 2,\none 223,34}
*{6,4,4,2}
*[*(yellow)]{3,2,1}
\end{equation}
is a skew tableau of shape $\nu-\la$, where $\la=(3,2,1)$ and $\nu=(6, 4,4,2)$ and the content is $\mu=(4,3,2,1)$. Its word is 
$$
w(T)=(1,1,1,2,1,3,2,2,4,3).
$$

\begin{defn}\label{LRT}
A Littlewood-Richardson tableau is a skew tableau $T$ with the following properties:
\begin{enumerate}[(i)]
\item{The numbers in each row of $T$ weakly increase from left to right and the numbers in each column of $T$ strictly increase from top to bottom.}
\item{For each positive integer $j$, starting from the first entry of $w(T)$ to any place in $w(T)$, there are at least as many as $j$'s as $(j+1)$'s.}
\end{enumerate}
\end{defn}
\noindent Then the Littlewood-Richardson coefficient $c^{\nu}_{\la\mu}$ is the number of the Littlewood-Richardson tableau of shape $\nu-\la$ and content $\mu$. The skew tableau $T$ in \eqref{First-LRT} is indeed a Littlewood-Richardson tableau.

\subsection{Classical groups $\mathrm{SO}(2n+1)$, $\mathrm{Sp}(2n)$ and $\mathrm{SO}(2n)$} 

In this section, we briefly go over the basic facts on orthogonal and symplectic groups which we will be using mainly for the purpose of fixing notations. We follow the expositions from \cite{FH} in part. 

Let $V$ be a complex vector space equipped with a nondegenerate symplectic or orthogonal form $Q$ such that
\begin{equation}\label{dim}
\dim V=2n+\delta, \quad \delta\in\{0, 1\}.
\end{equation}
Denote by
$$
\{e_1, e_2, \dots, e_{2n+\delta}\}
$$
a basis such that
$$
Q(e_i, e_{2n+1+\delta-i})=\pm Q(e_{2n+1+\delta-i}, e_i)=1
$$
for all $i=1, 2, \dots, n+\delta$ and such that all other pairings are $0$.

Let $G$ be the subgroup of $\SL(V)$ which preserves this form. Then by using the Cartan-Killing classification, one knows that $G$ is of type $B_n$ (resp. type $D_n$) if $Q$ is orthogonal and $\dim V=2n+1$ (resp. $\dim V=2n$). In the former case we write $G=\mathrm{SO}(2n+1)$ and in the latter we write $G=\mathrm{SO}(2n)$.  
If $Q$ is symplectic which forces $\dim V=2n$, we say that $G$ is of type $C_n$
and write $G=\mathrm{Sp}(2n)$.

Given any partition $\la$ with at most $2n+\delta$ parts, one obtains an irreducible representation $\mathbb{S}_{\la}(V)$ of $\SL(V)$ of highest weight $\la$ inside of $V^{\otimes |\la|}$. Given the form $Q$ and integers $1\leq i< j\leq |\la|$, one has a contraction map
$$
V^{\otimes |\la|}\lto V^{\otimes (|\la|-2)}
$$defined by
$$
v_1\otimes v_2\otimes \cdots\otimes v_{|\la|}\mapsto Q(v_i, v_j)v_1\otimes \cdots\otimes \hat{v}_i\otimes \cdots\otimes\hat{v}_j\otimes\cdots\otimes v_{|\la|},
$$where the hat means we have removed those two vectors.
Define $\mathbb{S}_{\langle\la\rangle}(V)$ (resp. $\mathbb{S}_{[\la]}(V)$) to be the intersection of $\mathbb{S}_\la(V)$ with the kernels of all possible contraction maps of symplectic form $Q$ (resp. orthogonal form $Q$). 

Then it is well-known that for type $B_n$ and $C_n$, $\mathbb{S}_{[\la]}(V)$ and $\mathbb{S}_{\langle \la\rangle}(V)$ are the irreducible representation of $G$ with highest weight $\la=(\la_1, \la_2, \dots, \la_{n-1}, \la_n)$. For type $D_n$, the same is true if $\la_n=0$. For type $D_n$ with $\la_n>0$, then $\mathbb{S}_{[\la]}(V)$ is the direct sum of two irreducible representations of $G$, one with highest weight $\la$ and the other with highest weight $\la^{-}=(\la_1, \dots, \la_{n-1}, -\la_n)$. For the proofs, see \cite[Theorem 17.11]{FH} for type $C_n$ and see \cite[Theorem 19.22]{FH} for type $B_n$ and $D_n$, for example.

Analogous to the tensor product decomposition \eqref{LR}, it is known \cite{King, KT} that there is an explicit decomposition formula of the tensor product of two given irreducible representations of each classical group into irreducible constituents. Moreover, Koike and Terada \cite{KT} prove that the decomposition rules for the tensor product coincide completely for $\mathrm{SO}(2n+1)$, $\mathrm{Sp}(2n)$ and $\mathrm{SO}(2n)$. 

For convenience, we denote by $\mathbb{S}_{\parallel\la\parallel}(V)$ either the $\mathbb{S}_{[\lambda]}(V)$ or the $\mathbb{S}_{\langle\lambda\rangle}(V)$ depending on group $G$. There is common structure constant $N^{\nu}_{\lambda\mu}$ such that 
\begin{equation}\label{GenLR}
\Sla(V) \otimes \mathbb{S}_{\parallel\mu\parallel}(V)=\bigoplus_{\substack{\nu\\\ell(\nu)\leq n}}N^{\nu}_{\lambda\mu}\mathbb{S}_{\parallel\nu\parallel}(V),
\end{equation}
where the coefficients $N^{\nu}_{\lambda\mu}$ are given by the Newell-Littlewood formula:
\begin{equation}\label{GenMul}
N^{\nu}_{\lambda\mu}=\sum_{\alpha, \beta, \gamma}c^{\la}_{\alpha \beta}c^{\mu}_{\alpha \gamma}c^{\nu}_{\beta\gamma},
\end{equation}where $c$'s denote the usual Littlewood-Richardson coefficients. The sum is over all partitions $\alpha$, $\beta$ and $\gamma$ (see \cite{SZ} and \cite[(25.27)]{FH}).


\section{Detection via the representation $\otimes^3$}\label{third}

For $G$ considered in current paper, recall that all irreducible modules of such $G$ are self-dual\cite[Lemma 4.2]{Seitz}. In this section, we prove the following key proposition:

\begin{prop}\label{key}
The representation $\otimes^3$ detects $\Sla(G)$ if and only if $N^{\la}_{\la\la} > 0$, where $N^{\la}_{\la\la}$ is the common structure constant as in \eqref{GenMul}.
\end{prop}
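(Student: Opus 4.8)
The key identity is that $\otimes^3$ detects an irreducible subgroup $H \le \GL_N$ (for $H$ whose standard representation is self-dual) if and only if the trivial representation appears in the fourth tensor power of the standard representation of $H$, equivalently if the multiplicity $\langle V^{\otimes 3}, V \rangle_H = \dim \mathrm{Hom}_H(V^{\otimes 3}, V)$ is positive, where $V = \mathbb{S}_{\parallel\la\parallel}(W)$ is the irreducible $G$-module realized inside $\GL_N$. So let me think about what "detects" should unwind to here.

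The paper should start from the definition: $r = \otimes^3 : \GL_N \to \GL(V^{\otimes 3})$ detects $H$ if $H$ fixes a line in $V^{\otimes 3}$. Taking $H = \Sla(G)$ — which is the image of $G$ under the irreducible representation with highest weight $\la$ — fixing a line means the trivial representation of $G$ occurs as a subrepresentation of $V^{\otimes 3}$. Since $G$ is reductive and (by self-duality) the module $V$ is self-dual, the trivial representation occurs in $V^{\otimes 3}$ iff it occurs in $V^{\otimes 3} \otimes V^* \cong V^{\otimes 4}$ iff $V$ occurs in $V^{\otimes 3} = V \otimes V \otimes V$. Now decompose $V \otimes V = \bigoplus_\mu N^\mu_{\la\la} \mathbb{S}_{\parallel\mu\parallel}(W)$ using \eqref{GenLR}, and then the multiplicity of $V = \mathbb{S}_{\parallel\la\parallel}(W)$ in $(V \otimes V) \otimes V$ is $\sum_\mu N^\mu_{\la\la} N^\la_{\mu\la}$. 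Each term is nonnegative, so this sum is positive iff some term is positive. The plan is then to observe that the $\mu = \la$ term is $N^\la_{\la\la} \cdot N^\la_{\la\la} = (N^\la_{\la\la})^2$, so $N^\la_{\la\la} > 0$ immediately gives detection; conversely, if $N^\la_{\la\la} = 0$, one must rule out every other $\mu$.

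The main obstacle is precisely that converse direction: showing that if $N^\la_{\la\la} = 0$ then $N^\mu_{\la\la} N^\la_{\mu\la} = 0$ for all $\mu$. For this I would use the Newell-Littlewood formula \eqref{GenMul} together with the symmetry $N^\la_{\mu\la} = N^\mu_{\la\la}$ (which follows from the manifest symmetry of \eqref{GenMul} in its three partition arguments, using self-duality so that there is no need to dualize). With that symmetry, the cross term becomes $(N^\mu_{\la\la})^2$, so $\sum_\mu N^\mu_{\la\la} N^\la_{\mu\la} = \sum_\mu (N^\mu_{\la\la})^2$. Hmm — but that would say detection is equivalent to $N^\mu_{\la\la} > 0$ for \emph{some} $\mu$, which is automatic (e.g. $\mu$ a partition with $N^\mu_{\la\la}\ne 0$ always exists). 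That cannot be right, so the symmetry I want is not quite that: the correct statement is that the multiplicity of $\mathbb{S}_{\parallel\la\parallel}$ in $V^{\otimes 3}$ equals $\sum_\mu N^\mu_{\la\la} N^\nu_{\mu\la}$ evaluated at $\nu = \la$, and by associativity/commutativity of the tensor product this equals the multiplicity of the trivial representation in $V^{\otimes 4}$, which by a standard reordering equals $\sum_\mu (N^\mu_{\la\la})(N^\mu_{\la\la})$ only if pairing $V\otimes V$ against $V \otimes V$ — i.e. $\dim\mathrm{Hom}_G(V\otimes V, V \otimes V) = \sum_\mu (N^\mu_{\la\la})^2$, and that is the dimension of the commutant, always $\ge 1$. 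So the genuinely correct reduction is: trivial rep in $V^{\otimes 3}$, not $V^{\otimes 4}$. I would therefore NOT introduce the spurious extra factor of $V$; instead, directly: $\langle \one, V^{\otimes 3}\rangle_G = \langle V, V\otimes V\rangle_G = \sum_\mu N^\mu_{\la\la}\langle V, \mathbb{S}_{\parallel\mu\parallel}\rangle$, which is just $N^\la_{\la\la}$ after re-expanding — wait, that over-counts. Let me be careful in the writeup: $\langle \one, V^{\otimes 3}\rangle = \langle V^*, V\otimes V\rangle = \langle V, V \otimes V\rangle$ (self-dual) $= $ coefficient of $\mathbb{S}_{\parallel\la\parallel}$ in the decomposition \eqref{GenLR} of $V \otimes V$, which is exactly $N^\la_{\la\la}$.

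So the clean argument is: $\otimes^3$ detects $\Sla(G)$ iff $\one_G \hookrightarrow V^{\otimes 3}$ iff $\dim\mathrm{Hom}_G(V^{\otimes 3}, \one) > 0$ iff $\dim\mathrm{Hom}_G(V \otimes V, V^*) > 0$ iff (self-duality) $\dim\mathrm{Hom}_G(V\otimes V, V) > 0$ iff the multiplicity of $V$ in $V \otimes V$ is positive iff $N^\la_{\la\la} > 0$ by \eqref{GenLR}. The steps I would write out are: (1) reduce "fixes a line in $V^{\otimes 3}$" to "$\one$ is a subrepresentation of $V^{\otimes 3}$" using complete reducibility of $G$; (2) use $\mathrm{Hom}_G(V^{\otimes 3},\one) \cong \mathrm{Hom}_G(V^{\otimes 2}, V^\vee)$ by adjunction; (3) invoke \cite[Lemma 4.2]{Seitz} to replace $V^\vee$ by $V$; (4) identify $\dim \mathrm{Hom}_G(V^{\otimes 2}, V)$ with the multiplicity $N^\la_{\la\la}$ in \eqref{GenLR}. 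The only subtlety worth flagging — and the place I would spend a sentence — is step (1): a line being fixed pointwise by $G$ is the same as $\one$ occurring, which uses that $G$ acting on a line can only act through a character, but $G = \Sla(G)$ here is already inside $\SL(V^{\otimes 3})$... actually the cleaner point is that since $G$ is semisimple (or at least has no nontrivial characters after passing to the relevant cover, as noted in the Remark "$r$ detects $H$ iff it detects $H^{\mathrm{der}}$"), any fixed line is a trivial subrepresentation. I expect no serious obstacle beyond getting this bookkeeping exactly right and correctly citing the self-duality input.
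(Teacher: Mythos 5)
Your final argument --- detection $\iff$ the trivial representation occurs in $V^{\otimes 3}$ $\iff$ $\mathrm{Hom}_G(V\otimes V, V^{\vee})\neq 0$ $\iff$ (by self-duality and \eqref{GenLR}) $N^{\la}_{\la\la}>0$ --- is correct and is essentially the paper's proof, which phrases the same adjunction as $\Sla(V)^{\otimes 3}\cong \mathrm{Hom}(\Sla(V)^{\vee},\Sla(V)\otimes\Sla(V))$ and then invokes highest-weight theory to see that only the $\nu=\la$ summand contributes. The $V^{\otimes 4}$ detour you correctly discard should simply be omitted from the writeup; your flagged point about a stabilized line being pointwise fixed (since $G$ is semisimple) is a legitimate bit of care that the paper glosses over.
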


\begin{proof}
Let $V$ be the standard representation of $G$. Then as described in \S \ref{prelim}, one obtains irreducible representations $\Sla(V)$ of $G$ with highest weight $\la$.

One has
\begin{align}
\Sla(V)^{\otimes 3}&\cong \mathrm{Hom}(\Sla(V)^{\vee}, \Sla(V)\otimes \Sla(V))\nonumber\\
&\cong\bigoplus_{\substack{\nu\\\ell(\nu)\leq n}}N^{\nu}_{\lambda\la}~ \mathrm{Hom}(\Sla(V)^{\vee},\mathbb{S}_{\parallel\nu\parallel}(V))\nonumber\\
&\cong\bigoplus_{\substack{\nu\\\ell(\nu)\leq n}}N^{\nu}_{\lambda\la}~ \mathrm{Hom}(\Sla(V), \mathbb{S}_{\parallel\nu\parallel}(V)), \label{Schur-Hom}
\end{align}
where we employ \eqref{GenMul} and use the fact $\Sla(V)$ is self-dual (see \cite{Seitz}). Therefore $\Sla(G)$ is detected by $\otimes^3$ if and only if for some $\la$ and $\nu$,
$$\mathrm{Hom}(\Sla(V), \mathbb{S}_{\parallel\nu\parallel}(V))\neq 0\quad\text{and}\quad N^{\nu}_{\la\la}\neq 0.$$

\noindent Notice that two partitions $\la$ and $\la'$ with at most $n$ parts (for $G=\mathrm{SO}(2n)$, again we assume $\la_n=0$) determine the same representation of $G$ if and only if $\la=\la'$. This is due to highest weight theory: the representation $\Sla(V)$ is the irreducible representation of $G$ with the highest weight $\la$ (see \cite[Theorem 17.11]{FH} for the symplectic group $G$ and \cite[Theorem 19.22]{FH} for the orthogonal group $G$). Thus $\Sla(G)$ is detected by $\otimes^3$ if and only if
\begin{equation}\label{main-Hom}
\mathrm{Hom}_G(\Sla(V), \mathbb{S}_{\parallel\nu\parallel}(V))\neq 0\quad\text{and}\quad N^{\nu}_{\la\la}\neq 0.
\end{equation}
In fact, this implies that $N^{\la}_{\la\la}>0$ if and only if $\Sla(G)$ is detected by $\otimes^3$. 
\end{proof}

Therefore in the rest of the paper, by employing Proposition \ref{key}, we figure out conditions on partitions $\la$ on which $\Sla(G)$ is detected by $\otimes^3$.


\section{Subgroups not detected by $\otimes^3$}\label{odd}

By Proposition \ref{key}, detection boils down to check whether the coefficient $N^{\la}_{\lambda\la}$ in \eqref{GenMul} is zero or not. For odd $|\la|$, we obtain a complete answer:

\begin{thm}\label{thm-odd}
Let $\la$ be a partition of an odd number. Then the representation $\otimes^3$ does not detect  $\Sla(G)$.
\end{thm}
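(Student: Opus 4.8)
The plan is to use Proposition \ref{key} and reduce the problem to showing that the Newell--Littlewood coefficient $N^{\la}_{\la\la}$ vanishes whenever $|\la|$ is odd. This is immediate from a parity observation applied to the formula \eqref{GenMul}. First I would note that each Littlewood--Richardson coefficient $c^{\sigma}_{\rho\tau}$ is zero unless $|\sigma|=|\rho|+|\tau|$. Hence in the sum
$$
N^{\la}_{\la\la}=\sum_{\alpha,\beta,\gamma}c^{\la}_{\alpha\beta}c^{\la}_{\alpha\gamma}c^{\la}_{\beta\gamma},
$$
only triples $(\alpha,\beta,\gamma)$ with $|\alpha|+|\beta|=|\la|$, $|\alpha|+|\gamma|=|\la|$, and $|\beta|+|\gamma|=|\la|$ contribute. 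Adding these three equations gives $2(|\alpha|+|\beta|+|\gamma|)=3|\la|$, so $|\la|$ must be even for any contributing triple to exist. Therefore if $|\la|$ is odd, every term in the sum vanishes, so $N^{\la}_{\la\la}=0$.

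Then I would invoke Proposition \ref{key}: since $\otimes^3$ detects $\Sla(G)$ if and only if $N^{\la}_{\la\la}>0$, and we have just shown $N^{\la}_{\la\la}=0$ for odd $|\la|$, the representation $\otimes^3$ does not detect $\Sla(G)$. One small point to spell out is that the index $\nu$ appearing in \eqref{main-Hom} runs over partitions with $\ell(\nu)\le n$, but the cleaner route is simply to observe that $\mathrm{Hom}_G(\Sla(V),\mathbb{S}_{\parallel\nu\parallel}(V))\ne 0$ forces $\nu=\la$ by irreducibility and highest weight theory (as already argued in the proof of Proposition \ref{key}), and then detection is equivalent to $N^{\la}_{\la\la}>0$; alternatively, one can argue directly that $N^{\nu}_{\la\la}$ requires $|\nu|=2|\la|$ and then $\mathrm{Hom}_G(\Sla(V),\mathbb{S}_{\parallel\nu\parallel}(V))\ne0$ with $\nu=\la$ is impossible unless $|\la|=2|\la|$, which is absurd. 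Either way the argument closes.

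I do not anticipate a genuine obstacle here; the content is entirely the parity bookkeeping in \eqref{GenMul}, and the only care needed is to confirm that the common structure constant $N^{\nu}_{\la\mu}$ really does inherit the grading $|\nu|=|\la|+|\mu|$ from the ordinary Littlewood--Richardson coefficients via \eqref{GenMul} — which it does, by exactly the three-equation computation above. If anything, the subtlety is purely expository: making sure the reduction through Proposition \ref{key} is quoted correctly and that the self-duality hypothesis on $G$ (assumption (A1), used to get $\Sla(V)^{\vee}\cong\Sla(V)$) is in force, which it is throughout the paper.
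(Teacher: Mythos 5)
Your proposal is correct and follows essentially the same route as the paper: both reduce to Proposition \ref{key} and observe that $N^{\nu}_{\la\mu}$ vanishes unless $|\la|+|\mu|+|\nu|$ is even (your three-equation computation is exactly the justification the paper gives in one line), so $N^{\la}_{\la\la}=0$ when $|\la|$ is odd. Your write-up merely makes the parity bookkeeping more explicit.
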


\begin{proof}
The proof is based on a simple fact that the coefficient $N^{\nu}_{\lambda\mu}$ is zero unless $|\la|+|\mu|+|\nu|$ is even. This is because $c^{\la}_{\alpha \beta}=0$ unless $|\alpha|+|\beta|=|\la|$. Therefore it is clear that $N^{\la}_{\la\la}=0$ unless $|\la|$ is even. Hence if $\la$ is a partition with odd $|\la|$, then the representation $\otimes^3$ cannot detect $\Sla(G)$.
\end{proof}

\section{Subgroups detected by $\otimes^3$}\label{even}

In this section, we will deal with the remaining case, namely the partitions $\la$ such that $|\la|$ is even. We will rewrite the statement of the result  in this case:

\begin{thm}\label{thm-even}
Let $\la$ be a partition of an even number. If $\la$ satisfies
\begin{enumerate}
\item[(1)]{all parts are even, or}
\item[(2)]{it has even $\ell(\la)$ and all parts are distinct and odd, or}
\item[(3)]{$\la$ is a hook partition, or}
\item[(4)]{$\la$ is a rectangular partition,}
\end{enumerate}
then the representation $\otimes^3$ detects $\Sla(G)$.
\end{thm}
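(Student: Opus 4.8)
By Proposition~\ref{key}, in each of the four cases it suffices to show that the Newell--Littlewood coefficient $N^{\la}_{\la\la}$ is strictly positive, i.e. that there exist partitions $\alpha,\beta,\gamma$ with
$$
c^{\la}_{\alpha\beta}\,c^{\la}_{\alpha\gamma}\,c^{\la}_{\beta\gamma} > 0.
$$
The plan is to exhibit, for each family, an explicit admissible triple $(\alpha,\beta,\gamma)$. The most natural first attempt is always $\alpha=\varnothing$, which forces $\beta=\gamma=\la$ and reduces the condition to $c^{\la}_{\la\la}>0$; this essentially never holds for $|\la|>0$ since $|\la|+|\la|\neq|\la|$, so every case genuinely requires a nontrivial $\alpha$. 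The next natural choice is $\beta=\gamma$, reducing to finding $\alpha,\beta$ with $|\alpha|+|\beta|=|\la|$, $c^{\la}_{\alpha\beta}>0$, and $c^{\la}_{\beta\beta}>0$ — that is, $\la$ must appear in $\mathbb{S}_\beta\otimes\mathbb{S}_\beta$ and also be obtained from $\beta$ by LR-adding some $\alpha$. I would try to arrange $\beta$ to be ``half of $\la$'' in an appropriate combinatorial sense and $\alpha$ the complementary half.

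First I would handle case~(1), all parts even. Write $\la=2\mu$ (halve every part). The key classical fact here is that $\mathbb{S}_\mu\otimes\mathbb{S}_\mu$ contains $\mathbb{S}_{2\mu}$ with multiplicity one (the ``top'' piece, realized on $\Sym^2$ of the Schur module when the diagram is doubled row-wise; combinatorially $c^{2\mu}_{\mu\mu}\geq 1$ by stacking two copies of the Young tableau of $\mu$ with entries shifted). Then take $\alpha=\mu$, $\beta=\gamma=\mu$: we get $c^{\la}_{\alpha\beta}=c^{2\mu}_{\mu\mu}\geq 1$ and likewise for the other two factors, so $N^{\la}_{\la\la}\geq 1$. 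For case~(2), $\ell(\la)$ even and all parts distinct and odd, the relevant fact is a classical result of Littlewood: a partition $\la$ with all parts distinct of the same parity and even length is a sum of ``domino/double'' shapes, and more usefully there is a self-pairing — I would pair up the rows of $\la$ two at a time, $\la_{2i-1}>\la_{2i}$, and build $\beta$ from the ``staircase-compatible'' halves so that $c^{\la}_{\beta\beta}>0$; explicitly, since consecutive parts differ, one can split the diagram of $\la$ into two equal pieces $\beta,\gamma$ with $\beta=\gamma$ (or $\beta,\gamma$ conjugate-compatible) realizing the LR condition. For cases~(3) and~(4) the partitions are small and rigid enough to compute $N^{\la}_{\la\la}$ essentially by hand: for a hook $\la=(a,1^b)$ with $a+b$ even I would take $\alpha$ a sub-hook and $\beta,\gamma$ the complementary hooks, using the well-understood LR rule for hooks (hooks tensor hooks decompose into hooks and near-hooks); for a rectangle $\la=(a^b)$ with $ab$ even I would cut the rectangle into two equal sub-rectangles — either $((a/2)^b)$ twice if $a$ is even, or $(a^{b/2})$ twice if $b$ is even — and use that $\mathbb{S}_{a^b}$ appears in $\mathbb{S}_{(a/2)^b}\otimes\mathbb{S}_{(a/2)^b}$ (rectangles behave especially well under LR: gluing two half-rectangles side by side gives the full rectangle with coefficient one).

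The main obstacle I anticipate is case~(2): unlike the other three, the combinatorial structure that makes $N^{\la}_{\la\la}>0$ is less transparent, and I expect to need the precise statement that for $\la$ with distinct odd parts and even length, $\la$ can be split as $\la=\beta+\gamma$ (as diagrams, via a valid LR filling) with $\beta$ and $\gamma$ of the \emph{same} shape, or failing that to find an asymmetric triple. A clean way to organize this: use the Newell--Littlewood interpretation $N^{\la}_{\la\la}=\dim\mathrm{Hom}_G(\mathbb{S}_{\parallel\la\parallel},\mathbb{S}_{\parallel\la\parallel}\otimes\mathbb{S}_{\parallel\la\parallel})$, and note that this is positive iff the symmetric (or alternating) square of $\mathbb{S}_{\parallel\la\parallel}$ contains $\mathbb{S}_{\parallel\la\parallel}$; for classical groups one can sometimes detect this via the existence of a suitable zero-weight vector or via a branching/restriction argument to a Levi subgroup where the rectangular or hook structure becomes a product of smaller known cases. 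I would also double-check the boundary constraint $\ell(\nu)\le n$ in \eqref{GenLR} does not interfere — since we only need $\nu=\la$ itself to occur and $\ell(\la)\le n$ by hypothesis (A2), the truncation is harmless, but this should be stated explicitly. Finally, throughout I would remark that because the Newell--Littlewood coefficients are the \emph{same} for types $B_n$, $C_n$, $D_{2n}$ (Koike--Terada), a single combinatorial argument settles all three families of groups simultaneously.
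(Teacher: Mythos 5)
Your overall strategy coincides with the paper's: reduce via Proposition~\ref{key} to showing $N^{\la}_{\la\la}>0$, then exhibit in each case a triple $(\alpha,\beta,\gamma)$ of partitions of $|\la|/2$ with $c^{\la}_{\alpha\beta}$, $c^{\la}_{\alpha\gamma}$, $c^{\la}_{\beta\gamma}$ all positive. Your case (1) is exactly Proposition~\ref{prop-even} ($\alpha=\beta=\gamma=\la/2$, the skew shape $\la-\alpha$ filled row by row with $i$'s). Your case (4) is correct, and for $m$ even your direct stacking of two half-height rectangles $(k^{m/2})$ is a slightly more economical route than the paper's, which instead passes to the conjugate and invokes $c^{\la}_{\mu\nu}=c^{\la'}_{\mu'\nu'}$. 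Case (3) is only a sketch (``sub-hook plus complementary hook''), but it is the right sketch; the paper completes it by splitting on the parities of $a$ and $b$, and in the subcase $a$ even, $b$ odd one is forced into an asymmetric triple $\alpha=\gamma\neq\beta$ where the tableau witnessing $c^{\la}_{\alpha\gamma}>0$ requires moving one entry $1$ from the arm down into the leg. Your remarks on the harmlessness of the truncation $\ell(\nu)\leq n$ and on the type-independence of the Newell--Littlewood coefficients are correct and worth keeping.

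The genuine gap is case (2), which you yourself flag as the main obstacle and do not resolve. Since every part of $\la$ is odd, the partition $\la/2$ does not exist, so the case-(1) device fails outright, and your hoped-for symmetric splitting $\la=\beta+\gamma$ with $\beta=\gamma$ and $c^{\la}_{\beta\beta}>0$ is asserted, not constructed. The paper (Proposition~\ref{prop-dist}) sidesteps this with an explicit asymmetric triple: writing $\ell(\la)=2k$, take $\alpha=\gamma=(\tfrac{\la_1+1}{2},\dots,\tfrac{\la_k+1}{2},\tfrac{\la_{k+1}-1}{2},\dots,\tfrac{\la_{2k}-1}{2})$ and $\beta=(\tfrac{\la_1-1}{2},\dots,\tfrac{\la_k-1}{2},\tfrac{\la_{k+1}+1}{2},\dots,\tfrac{\la_{2k}+1}{2})$. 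Here both hypotheses are used essentially: distinctness gives $\la_k-1\geq\la_{k+1}+1$, so $\beta$ is a genuine partition, and evenness of $\ell(\la)$ gives $|\alpha|=|\beta|=|\gamma|=|\la|/2$. Two of the three coefficients are then witnessed by row-by-row fillings as in case (1), while the third, $c^{\la}_{\alpha\gamma}$, needs a ``wrap-around'' filling in which the last entry $i$ of each row of $\gamma$ spills into the next row of the skew shape (the tableau $T_3$ of Example~\ref{ex2}). Without such an explicit construction, or a positivity criterion that you would then have to prove separately, case (2) of the theorem is not established.
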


\begin{proof}
The proof follows from Propositions \ref{prop-even}, \ref{prop-dist}, \ref{prop-hook} and \ref{prop-rec}, respectively.
\end{proof}
\noindent We will consider all cases separately in following subsections. 

\subsection{All parts are even}

In this section, we consider the first case of Theorem \ref{thm-even}.

\begin{prop}\label{prop-even}
Let $\la$ be a partition such that all parts are even. Then the representation $\otimes^3$ detects $\Sla(G)$.
\end{prop}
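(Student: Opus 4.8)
The plan is to verify the criterion from Proposition \ref{key}, namely that $N^{\la}_{\la\la} > 0$ whenever every part of $\la$ is even. By the Newell--Littlewood formula \eqref{GenMul},
$$
N^{\la}_{\la\la}=\sum_{\alpha,\beta,\gamma}c^{\la}_{\alpha\beta}c^{\la}_{\alpha\gamma}c^{\la}_{\beta\gamma},
$$
so it suffices to exhibit a single triple $(\alpha,\beta,\gamma)$ of partitions for which all three Littlewood--Richardson coefficients are positive. Since the summand is a product of nonnegative integers, one positive term is enough. The natural candidate, given that all parts of $\la$ are even, is to take $\alpha=\beta=\gamma=\tfrac12\la:=(\la_1/2,\la_2/2,\dots,\la_n/2)$, which is a genuine partition precisely because the parts of $\la$ are even. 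Then I must check that $c^{\la}_{\frac12\la,\,\frac12\la}>0$.

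The key step is therefore the claim that $c^{\la}_{\mu\mu}>0$ when $\la=2\mu$, i.e.\ that $\mathbb{S}_{2\mu}(V)$ appears in $\mathbb{S}_\mu(V)\otimes\mathbb{S}_\mu(V)$. This is a standard fact: one way to see it is to produce an explicit Littlewood--Richardson tableau of shape $2\mu-\mu$ and content $\mu$. Concretely, in the skew shape $(2\mu)-\mu$, the $i$th row consists of $\mu_i$ boxes sitting in columns $\mu_i+1,\dots,2\mu_i$; filling every box in row $i$ with the entry $i$ gives a filling that weakly increases along rows (trivially, constant rows), strictly increases down columns (since the box in column $k$ of row $i$, when it exists, lies strictly to the right of where row $i-1$ ends — here one uses $\mu_{i-1}\ge\mu_i$ so that row $i-1$ occupies columns $\mu_{i-1}+1,\dots,2\mu_{i-1}$ and row $i$ occupies columns $\mu_i+1,\dots,2\mu_i$, and a short check shows no two equal entries are vertically adjacent), and whose reverse reading word is a lattice word because reading right-to-left, top-to-bottom we encounter $\mu_1$ ones, then $\mu_2$ twos, and so on, so at every prefix the count of $j$'s dominates the count of $(j+1)$'s. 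Hence this filling is a valid Littlewood--Richardson tableau, so $c^{2\mu}_{\mu\mu}\ge 1$.

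With that in hand the proof concludes quickly: taking $\alpha=\beta=\gamma=\mu$ with $\mu=\tfrac12\la$ gives $c^{\la}_{\alpha\beta}c^{\la}_{\alpha\gamma}c^{\la}_{\beta\gamma}=(c^{\la}_{\mu\mu})^3\ge 1$, hence $N^{\la}_{\la\la}\ge 1>0$, and Proposition \ref{key} then shows $\otimes^3$ detects $\mathbb{S}_{\parallel\la\parallel}(G)$. (One should also note $\ell(\mu)=\ell(\la)\le n$, so $\mu$ and $\la$ really are among the partitions indexing representations of $G$, and that the argument is uniform across types $B_n$, $C_n$ and $D_{2n}$ because the structure constants $N$ coincide for all three by Koike--Terada.) I expect the only real subtlety — the "main obstacle," though it is mild — to be the careful verification of the column-strictness condition in Definition \ref{LRT} for the proposed tableau, since it is the one place where the inequalities $\mu_{i-1}\ge\mu_i$ must be used rather than just the evenness of the parts; everything else is bookkeeping.
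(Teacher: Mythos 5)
Your proposal is correct and follows essentially the same route as the paper: both take $\alpha=\beta=\gamma=\tfrac12\la$ and exhibit the Littlewood--Richardson tableau of shape $\la-\tfrac12\la$ with content $\tfrac12\la$ obtained by filling row $i$ entirely with $i$'s, then invoke Proposition \ref{key}. Your verification of column-strictness and the lattice-word condition is a little more explicit than the paper's (which calls the construction ``obvious''), but the argument is the same; note that column-strictness is in fact immediate here since consecutive rows carry the distinct constant entries $i-1<i$.
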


\begin{proof}
Let $\la=(\la_1, \la_2, \dots, \la_n)$ be the partition such that all $\la_i$ are even. From the fact \eqref{Schur-Hom}, one needs to prove that the coefficient $N^{\la}_{\la\la}$ given by
$$
N^{\la}_{\lambda\la}=\sum_{\alpha, \beta, \gamma}c^{\la}_{\alpha \beta}c^{\la}_{\alpha \gamma}c^{\la}_{\beta\gamma}
$$
is positive.

We will construct partitions $\alpha$, $\beta$ and $\gamma$ such that $c^{\la}_{\alpha \beta}\geq 1$, $c^{\la}_{\alpha \gamma}\geq 1$ and $c^{\la}_{\beta\gamma}\geq 1$, thus in turn $N^{\la}_{\la\la}\geq 1$. Let $\alpha$, $\beta$ and $\gamma$ be the partitions give by
\begin{equation}
\alpha=\beta=\gamma=(\tfrac{\la_1}{2}, \tfrac{\la_2}{2}, \dots, \tfrac{\la_n}{2}).
\end{equation}
Then it is not hard to obtain that, for example, a skew tableau of shape $\la-\alpha$ with content $\beta$ in an obvious way: Put $1$'s into $\tfrac{\la_1}{2}$ boxes in the first row of $\beta$ and add them to the right side of the $\tfrac{\la_1}{2}$ empty boxes in the first row of $\alpha$. This becomes then the first row of $\la$  with $\la_1$ boxes where the first half of them are empty and the remaining half of them have $1$'s in them. Likewise, put $2$'s into $\tfrac{\la_2}{2}$ boxes in the second row of $\beta$ and add them to the right side of the $\tfrac{\la_2}{2}$ empty boxes in the second row of $\alpha$. Again then this becomes the second row of $\la$  with $\la_2$ boxes where the first half of them are empty and the remaining half of boxes have $2$'s in them. We repeat this process for all $i$. This gives us the skew tableau of shape $\la-\alpha$ with content $\beta$. This implies that $c^{\la}_{\alpha\beta}\geq 1$. Similarly, in the exact same method, we can show that $c^{\la}_{\alpha\gamma}$ and $c^{\la}_{\beta\gamma}$ are both at least $1$. All together, we conclude that $N^{\la}_{\la\la}\geq 1$, which completes the proof.
\end{proof}

The following example shows the skew tableau of shape $\la-\alpha$ with content $\beta$:
\begin{example}\label{ex1}
Let $\la=(6,4,4,2,2)$, $\alpha=(3,2,2,1,1)$, $\beta=(3, 2,2,1,1)$ be partitions as in the proof of Theorem \ref{thm-even}. The following Young diagrams explain how to obtain the skew-tableau of shape $\la-\alpha$ with content $\beta$ in the proof of Proposition \ref{prop-even}:
$$
\ytableausetup{aligntableaux=top}
\alpha=\ydiagram[*(yellow)]{3,2,2,1,1}\quad``+"\quad
\beta=\begin{ytableau} 1&1&1\\2&2\\3&3\\4\\5\end{ytableau}\quad\leadsto\quad
T=\ytableaushort
{\none \none \none  1 1 1, \none \none 22, \none\none 33, \none 4, \none 5}
*{6,4,4,2,2}
*[*(yellow)]{3,2,2, 1,1}
$$
Then $w(T)=(1,1,1,2,2,3,3,4,5)$ and $T$ is clearly a Littlewood-Richardson tableau.
\end{example}


\subsection{Even $\ell(\la)$ and all parts are distinct and odd} 

In this section, we consider the partitions $\la$ such that $\ell(\la)$ is even and all parts are distinct and odd.

\begin{prop}\label{prop-dist}
Let $\la$ be a partition such that $\ell(\la)$ even and all parts are distinct and odd. Then the representation $\otimes^3$ detects $\Sla(G)$.
\end{prop}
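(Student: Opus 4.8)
The plan is to verify the criterion from Proposition \ref{key}, namely to exhibit partitions $\alpha$, $\beta$, $\gamma$ with $c^{\la}_{\alpha\beta}\geq 1$, $c^{\la}_{\alpha\gamma}\geq 1$ and $c^{\la}_{\beta\gamma}\geq 1$, which forces $N^{\la}_{\la\la}\geq 1$. Write $\la=(\la_1>\la_2>\dots>\la_{2m})$ with all $\la_i$ odd and $\ell(\la)=2m$ even. Pair up the rows as $(\la_1,\la_2),(\la_3,\la_4),\dots,(\la_{2m-1},\la_{2m})$; since consecutive parts are distinct, each pair satisfies $\la_{2i-1}>\la_{2i}$, and since both are odd, $\la_{2i-1}+\la_{2i}$ is even while $\la_{2i-1}-\la_{2i}$ is even as well.

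The natural choice is to make $\alpha$ "peel off" roughly the top half of the diagram and $\beta$ the bottom half, but because the parts are odd we cannot split a single row evenly. The fix is to work two rows at a time: within the $i$th pair of rows, of total size $\la_{2i-1}+\la_{2i}$, assign $(\la_{2i-1}+\la_{2i})/2$ boxes to $\alpha$ and the remaining $(\la_{2i-1}+\la_{2i})/2$ to $\beta$, arranged so that the shape $\la-\alpha$ is a genuine skew shape. Concretely one can take (for the purposes of $c^\la_{\alpha\beta}$) $\alpha$ to have parts $\la_{2i-1}$ and $\la_{2i}$ replaced by $\la_{2i-1}$ and $(\la_{2i-1}+\la_{2i})/2 - $ something — I would instead take the cleaner symmetric choice $\alpha=\beta=\gamma$ where within each pair the first row contributes $\tfrac{\la_{2i-1}+1}{2}$ and the second $\tfrac{\la_{2i}-1}{2}$ to one partition, and the complementary amounts to the other; one checks that the resulting arrays are weakly decreasing and hence honest partitions because $\la_{2i-1}>\la_{2i}$ guarantees $\tfrac{\la_{2i-1}+1}{2}\geq\tfrac{\la_{2i}+1}{2}>\tfrac{\la_{2i}-1}{2}$ and the gap between consecutive pairs is preserved by the distinctness hypothesis. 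Then I would fill the skew shape $\la-\alpha$ with content $\beta$ using the obvious row-filling (row $j$ of the skew part gets all $j$'s), reading off the word and checking the Littlewood–Richardson lattice condition (i) and (ii) of Definition \ref{LRT} exactly as in the proof of Proposition \ref{prop-even}; the strict-column-increase condition is where the odd/distinct hypotheses are used, and it is satisfied precisely because after removing $\alpha$ the remaining boxes in column $c$ occur in strictly increasing rows with strictly increasing labels. By the symmetry $\alpha=\beta=\gamma$ the same tableau construction gives $c^\la_{\alpha\gamma}\geq 1$ and $c^\la_{\beta\gamma}\geq 1$ simultaneously.

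The main obstacle will be bookkeeping: unlike the all-even case, the split of each pair of odd rows is not forced and one must choose the offsets so that (a) $\alpha,\beta,\gamma$ are weakly decreasing sequences, (b) $\la-\alpha$ really is a skew diagram (i.e.\ $\alpha\subseteq\la$ with the containment holding row by row), and (c) the canonical filling satisfies the strict increase down columns. I expect that the hypothesis "$\ell(\la)$ even" is exactly what makes the pairing work with no leftover row, and "all parts distinct and odd" is exactly what makes each local two-row split produce valid partition entries with the required strict inequalities; so after setting up the pairing carefully the verification of Definition \ref{LRT}(i)--(ii) should be a short direct check on the explicit word, analogous to Example \ref{ex1}. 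If the fully symmetric choice $\alpha=\beta=\gamma$ turns out to obstruct condition (c) in some column, the fallback is to allow $\alpha$, $\beta$, $\gamma$ to differ slightly (still pairwise "complementary" within each pair of rows), which loosens the constraints enough while keeping all three Littlewood–Richardson coefficients positive.
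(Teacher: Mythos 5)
Your overall strategy coincides with the paper's: produce $\alpha,\beta,\gamma$ with $|\alpha|=|\beta|=|\gamma|=|\la|/2$ by shifting the halved odd parts by $\pm\tfrac12$, and certify all three Littlewood--Richardson coefficients by explicit tableaux. But the construction as written is internally inconsistent at the decisive point. You cannot simultaneously take $\alpha=\beta=\gamma$ \emph{and} have each row of $\la$ contribute complementary amounts $\tfrac{\la_j+1}{2}$ and $\tfrac{\la_j-1}{2}$ to ``one partition'' and ``the other'': complementarity forces $\alpha_j+\beta_j=\la_j$ with $\la_j$ odd, hence $\alpha\neq\beta$. This matters because your only verification device, the ``obvious row-filling'' in which row $j$ of $\la-\alpha$ receives only $j$'s, produces a tableau whose content is exactly the row-wise complement $(\la_1-\alpha_1,\la_2-\alpha_2,\dots)$; it certifies $c^{\la}_{\alpha\beta}>0$ only for that complementary pair. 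Since the parts of $\la$ are odd, no partition is its own row-wise complement, so at least one of the three coefficients you need --- the one of the form $c^{\la}_{\alpha\gamma}$ with $\alpha=\gamma$, or more generally any pair that is not row-wise complementary --- cannot be handled by the row-filling. It requires a genuinely skew filling in which a box of each label spills from one row into the next, and one must then check the lattice-word condition for the resulting non-obvious word. That is precisely the hard case: the paper's proof devotes a separate paragraph to it (see the tableau $T_3$ in Example \ref{ex2}), whereas your proposal only flags it as a possible ``obstruction'' with an unexecuted fallback. As it stands, the proof establishes at most two of the three required inequalities.

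Two smaller points. First, your pairing of consecutive rows $(\la_{2i-1},\la_{2i})$ differs from the paper's split into the top $k$ and bottom $k$ rows; both can be made to work, but you should note that with either choice the weak-decrease checks for $\beta$ at the seams (e.g.\ $\tfrac{\la_j-1}{2}\ge\tfrac{\la_{j+1}+1}{2}$, i.e.\ $\la_j\ge\la_{j+1}+2$) are exactly where ``distinct and odd'' is used. Second, your claim that the odd/distinct hypotheses are what make the columns strictly increase is misplaced: for the row-filling, column-strictness is automatic because distinct rows carry distinct increasing labels; the hypotheses are needed to make $\alpha,\beta,\gamma$ honest partitions and (together with ``$\ell(\la)$ even'') to make the spill-over filling close up correctly.
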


\begin{proof}
Let $\la$ be a partition such that $\ell(\la)=2k\leq n$ and all nonzero $\la_i$ are distinct and odd. As before, we need to construct the partitions $\alpha$, $\beta$ and $\gamma$ such that $c^{\la}_{\alpha \beta}\geq 1$, $c^{\la}_{\alpha \gamma}\geq 1$ and $c^{\la}_{\beta\gamma}\geq 1$. Let $\alpha$, $\beta$ and $\gamma$ be partitions such that
$$
\alpha=\gamma=(\tfrac{\la_1+1}{2},  \dots, \tfrac{\la_k+1}{2}, \tfrac{\la_{k+1}-1}{2},\dots, \tfrac{\la_{2k}-1}{2})
$$and
$$\beta=(\tfrac{\la_1-1}{2}, \dots, \tfrac{\la_k-1}{2}, \tfrac{\la_{k+1}+1}{2},\dots, \tfrac{\la_{2k}+1}{2}).
$$
Clearly $|\alpha|=|\beta|=|\gamma|=\tfrac{|\la|}{2}$. One notices that the assumption $\la_k>\la_{k+1}$ implies that, for $\beta$ to be a partition, the terms should satisfy the following inequality
$$
\tfrac{\la_k-1}{2}\geq \tfrac{\la_{k+1}+1}{2}.
$$
Moreover, by the assumption on $\la_i$ being distinct and odd, one notes that the last part $\tfrac{\la_{2k}-1}{2}$ of $\alpha$ and $\gamma$  could be zero if $\la_{2k}=1$.

Showing $c^{\la}_{\alpha\beta}\geq 1$ will be similar to the proof of Proposition \ref{prop-even}: For $1\leq i\leq k$, put $i$'s into $\tfrac{\la_i-1}{2}$ boxes in the $i$th row of $\beta$ and add them to the right side of the $\tfrac{\la_i+1}{2}$ empty boxes in the $i$th row of $\alpha$. This becomes then the $i$th row of $\la$ with $\la_i$ boxes where the first $\tfrac{\la_i+1}{2}$ boxes are empty and the remaining $\tfrac{\la_i-1}{2}$ boxes have $i$'s in them. For $k+1\leq i\leq 2k$, put $i$'s into $\tfrac{\la_i+1}{2}$ boxes in the $i$th row of $\beta$ and add them to the right side of the $\tfrac{\la_i-1}{2}$ empty boxes in the $i$th row of $\alpha$. This becomes then the $i$th row of $\la$ with $\la_i$ boxes where the first $\tfrac{\la_i-1}{2}$ boxes are empty and the remaining $\tfrac{\la_i+1}{2}$ boxes have $i$'s in them. If $\la_{2k}>1$, then the resulting tableau is just the skew tableau of shape $\la-\alpha$ with content $\beta$. If $\la_{2k}=1$, then the last part $\tfrac{\la_{2k}-1}{2}$ of $\alpha$ becomes zero, so $\alpha$ will have only $2k-1$ nonzero parts. In this case, we add the last box of $\beta$ containing $2k$ to the bottom of the first column of $\alpha$ (compare with the Young diagrams in the first case of Example \ref{ex2} below). This becomes then the last row of $\la$. This gives us the skew tableau of shape $\la-\alpha$ with content $\beta$. This implies that $c^{\la}_{\alpha\beta}\geq 1$ in either case. 

With the exact same method, we obtain the skew-tableau of shape $\la-\beta$ with content $\gamma$: For $1\leq i\leq k$, put $i$'s into $\tfrac{\la_i+1}{2}$ boxes in the $i$th row of $\gamma$ and add them to the right side of the $\tfrac{\la_i-1}{2}$ empty boxes in the $i$th row of $\beta$. This becomes then the $i$th row of $\la$ with $\la_i$ boxes where the first $\tfrac{\la_i-1}{2}$ boxes are empty and the remaining $\tfrac{\la_i+1}{2}$ boxes have $i$'s in them. For $k+1\leq i\leq 2k$, put $i$'s into $\tfrac{\la_i-1}{2}$ boxes in the $i$th row of $\gamma$ and add them to the right side of the $\tfrac{\la_i+1}{2}$ empty boxes in the $i$th row of $\beta$. This becomes then the $i$th row of $\la$ with $\la_i$ boxes where the first $\tfrac{\la_i+1}{2}$ boxes are empty and the remaining $\tfrac{\la_i-1}{2}$ boxes have $i$'s in them. If the last part $\tfrac{\la_{2k}-1}{2}$ of $\gamma$ is zero, we don't have any box to be added. In that case, the last row of $\beta$ will be just the last row of $\la$ (compare with the Young diagrams in the second case of Example \ref{ex2} below). This implies that $c^{\la}_{\beta\gamma}\geq 1$.

Now, in order to prove that $c^{\la}_{\alpha\gamma}\geq 1$, we will need to modify the above process slightly: For each $1\leq j\leq 2k$, put $j$'s into all boxes in the $j$th row of $\gamma$. To right of the first row of $\alpha$, add \textit{only} $\tfrac{\la_1-1}{2}$ boxes with $1$'s in them. This ensure $\la_1$ boxes in the first row of $\la$. The last box with $1$ in it should be added to right to the second row of $\alpha$. After that, we add the boxes with $2$ in them in the second row until we reach to $\la_2$ boxes all together. Whatever the remaining boxes with $2$ should be added into the third row. We repeat this process until we add all the boxes of $\gamma$ with numbers in them (compare with the Young diagrams in the third case of Example \ref{ex3} below).  In this way, we obtain the skew tableau of shape $\la-\alpha$ with content $\gamma$. Therefore we prove that $c^{\la}_{\alpha\gamma}\geq 1$ which completes the proof.
\end{proof}

The following example shows how to obtain skew tableau of shape $\la-\alpha$ with content $\beta$, skew tableau of shape $\la-\beta$ with content $\gamma$, and skew tableau of shape $\la-\alpha$ with content $\gamma$, respectively, by the process given in the proof of Proposition \ref{prop-dist}.

\begin{example}\label{ex2}
Let $\la=(7,5,3,1)$, $\alpha=(4,3,1,0)$, $\beta=(3,2,2,1)$ and $\gamma=(4,3,1,0)$ be partitions met the conditions in Proposition \ref{prop-dist}. The following Young diagrams explain how to obtain the skew-tableau of shape $\la-\alpha$ with content $\beta$ in the proof of Proposition \ref{prop-dist}:
$$
\ytableausetup{aligntableaux=top}
\alpha=\ydiagram[*(yellow)]{4,3,1}\quad``+"\quad
\beta=\begin{ytableau} 1&1&1\\2&2\\3&3\\4\end{ytableau}\quad\leadsto\quad
T_1=\ytableaushort
{\none \none \none\none 1 1 1, \none\none\none 22, \none33, 4 }
*{7,5,3,1}
*[*(yellow)]{4,3,1}
$$
Then $w(T_1)=(1,1,1,2,2,3,3,4)$ and $T_1$ is clearly a Littlewood-Richardson tableau. 

For the same $\la$, $\alpha$, $\gamma$ as above, the following Young diagrams explain the method for obtaining the skew-tableau of shape $\la-\beta$ with content $\gamma$:
$$
\ytableausetup{aligntableaux=top}
\beta=\ydiagram[*(yellow)]{3,2,2,1}\quad``+"\quad
\gamma=\begin{ytableau} 1&1&1&1\\2&2&2\\3\end{ytableau}\quad\leadsto\quad
T_2=\ytableaushort
{\none \none \none 1 1 1 1, \none\none 2 22, \none \none3, \none }
*{7,5,3,1}
*[*(yellow)]{3,2,2,1}
$$
Then $w(T_2)=(1,1,1,1,2,2,2,3)$ and $T_2$ is again a Littlewood-Richardson tableau. 

For the same $\la$, $\alpha$, $\gamma$ as above, the following Young diagrams explain the method for obtaining the skew-tableau of shape $\la-\alpha$ with content $\gamma$:
$$
\ytableausetup{aligntableaux=top}
\alpha=\ydiagram[*(yellow)]{4,3,1}\quad``+"\quad
\gamma=\begin{ytableau} 1&1 &1&1\\2&2&2\\3\end{ytableau}\quad\leadsto\quad
T_3=\ytableaushort
{\none \none \none \none 1 1 1, \none \none\none 1 2, \none 22, 3}
*{7,5,3,1}
*[*(yellow)]{4,3,1}
$$
Then $w(T_3)=(1,1,1,2,1,2,2,3)$ and $T_3$ is once again a Littlewood-Richardson tableau.
\end{example}

\subsection{Hook partitions}\label{hook}

In this section, we consider partitions $\la$ with at most $n$ parts whose shape is like a hook, namely
\begin{equation}\label{hook-la}
\la=(1+a, 1^b), \quad b\in \{0, 1, 2, \dots, n-1\}.
\end{equation}
Here $a$ is any nonnegative integer. We call such a partition $\la$ the hook partition with the arm length $a$ and the leg length $b$. Note that $|\la|=1+a+b$ and $\ell(\la)=b+1\leq n$. For example, the partition $\la=(1+3, 1^2)$ is a hook partition with arm length $3$ and the leg length $2$ and its Young diagram is 
$$
\yng(4,1,1)
$$
Clearly hook partitions are not fit to the cases considered in Proposition \ref{prop-even} and Proposition \ref{prop-dist}.

\begin{prop}\label{prop-hook}
Let $\la$ be a hook partition of an even number. Then the representation $\otimes^3$ detects $\Sla(G)$.
\end{prop}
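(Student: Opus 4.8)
The plan is to use Proposition \ref{key}: it suffices to show $N^{\la}_{\la\la} > 0$ for a hook partition $\la = (1+a, 1^b)$ with $|\la| = 1+a+b$ even. By the Newell--Littlewood formula \eqref{GenMul}, $N^{\la}_{\la\la} = \sum_{\alpha,\beta,\gamma} c^{\la}_{\alpha\beta} c^{\la}_{\alpha\gamma} c^{\la}_{\beta\gamma}$, so just as in Propositions \ref{prop-even} and \ref{prop-dist}, it is enough to exhibit a single triple of partitions $(\alpha, \beta, \gamma)$, each of size $\tfrac{|\la|}{2}$, with all three Littlewood--Richardson coefficients $c^{\la}_{\alpha\beta}$, $c^{\la}_{\alpha\gamma}$, $c^{\la}_{\beta\gamma}$ at least $1$. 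The key structural fact I would exploit is that hooks behave especially simply under the Littlewood--Richardson rule: a skew shape $\la - \alpha$ where $\la$ is a hook can only be filled in very constrained ways, and the product $\mathbb{S}_{\alpha} \otimes \mathbb{S}_{\beta}$ contains the hook $\mathbb{S}_{\la}$ essentially only when $\alpha$ and $\beta$ are themselves hooks that ``fit together'' to form $\la$.

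Concretely, write $1 + a + b = 2m$. I would split into the parity of $a$ and $b$ (note $a+b$ is odd, so exactly one of $a,b$ is even). The natural guess is to take each of $\alpha, \beta, \gamma$ to be a hook of size $m$ that is, roughly, ``half'' of $\la$: say $\alpha = \gamma = (1 + a_1, 1^{b_1})$ and $\beta = (1 + a_2, 1^{b_2})$ with $a_1 + a_2 = a$, $b_1 + b_2 = b$ chosen so all parts are $\le$ the corresponding parts of $\la$ and the sizes match. For instance, if $a$ is even one can try $a_1 = a_2 = a/2$ and split $b = b_1 + b_2$ with $b_1 - b_2 \in \{0,1\}$; if $b$ is even, split $b$ evenly and split $a$ so that $a_1 + a_2 = a$ with $a_1 - a_2 \in \{0,1\}$. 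For the computation $c^{\la}_{\alpha\beta} \ge 1$ one checks that the skew shape $\la - \alpha$ (the hook $\la$ with a smaller hook $\alpha$ removed from the top-left) is itself a union of a horizontal strip of $a_2$ boxes in the first row and a vertical strip of $b_2$ boxes in the first column, and filling the row with $1$'s and the column with $2, 3, \dots$ down to $b_2 + 1$ gives a valid Littlewood--Richardson tableau of content $\beta = (1 + a_2, 1^{b_2})$; I'd verify the lattice-word condition, which holds because the $1$'s all come first in the reading word. The coefficients $c^{\la}_{\alpha\gamma}$ and $c^{\la}_{\beta\gamma}$ are handled the same way, modulo checking that the roles of arm and leg match up correctly for the chosen split.

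The main obstacle I anticipate is purely bookkeeping: with the constraint that $\alpha, \beta, \gamma$ must each have at most $n$ parts and each part at most the corresponding part of $\la$, one must confirm the chosen hooks are genuinely contained in $\la$ (i.e. $1 + a_i \le 1 + a$ and $b_i \le b$, which is automatic, but also that removing $\alpha$ from $\la$ leaves a legitimate skew shape whose connected pieces can be LR-filled with content exactly $\beta$), and to handle the small edge cases where $a$ or $b$ is $0$ or $1$ (e.g. $\la = (2)$, $\la = (1^2)$, $\la = (1+a, 1)$, $\la = (2, 1^b)$) where the hook degenerates to a single row or column. For a single row $\la = (2)$ one takes $\alpha = \beta = \gamma = (1)$ and all three coefficients are $1$; for a single column $\la = (1^{2m})$ one takes $\alpha = \beta = \gamma = (1^m)$ similarly. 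I would organize the argument by first disposing of these degenerate cases, then giving the general hook split according to the parity of $a$, and in each case exhibiting the three explicit LR-fillings (with an accompanying example, parallel to Examples \ref{ex1} and \ref{ex2}, to make the pattern transparent). I expect no conceptual difficulty beyond verifying the lattice condition in each of the (few) cases.
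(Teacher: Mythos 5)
Your overall strategy is exactly the paper's: reduce to $N^{\la}_{\la\la}>0$ via Proposition \ref{key} and exhibit three sub-hooks $\alpha,\beta,\gamma$ of size $|\la|/2$ with positive Littlewood--Richardson coefficients. But the specific split you propose is arithmetically inconsistent and, as written, makes every coefficient vanish for degree reasons. If $\alpha=(1+a_1,1^{b_1})$ and $\beta=(1+a_2,1^{b_2})$ with $a_1+a_2=a$ and $b_1+b_2=b$, then $|\alpha|+|\beta|=2+a+b=|\la|+1$, so the requirement ``the sizes match'' cannot be met and $c^{\la}_{\alpha\beta}=0$. The same off-by-one appears in your description of the filling: the skew shape $\la-\alpha$ consists of $a-a_1$ boxes in the first row and $b-b_1$ boxes in the first column, and filling the row with $1$'s and the column with $2,\dots$ produces content $(a-a_1,1^{b-b_1})$, which is \emph{not} the hook $(1+a_2,1^{b_2})$ under your constraints. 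The source of the discrepancy is the corner box of the hook: it is counted once in $|\la|$ but once in each of $\alpha$ and $\beta$, so the correct constraints are $a_1+a_2=a-1$, $b_1+b_2=b$ (fill the row with $1$'s, the column with $2,\dots,b_2+1$) or $a_1+a_2=a$, $b_1+b_2=b-1$ (same, but with one extra $1$ placed at the top of the vertical strip).

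Once corrected, your construction becomes the paper's: for $a$ odd, $b$ even one takes $\alpha=\beta=\gamma=(1+\tfrac{a-1}{2},1^{b/2})$; for $a$ even, $b$ odd one takes $\alpha=\gamma=(1+\tfrac{a}{2},1^{(b-1)/2})$ and $\beta=(\tfrac{a}{2},1^{(b+1)/2})$. Note that in the second case the three coefficients are \emph{not} all handled ``the same way'': since $|\gamma|=|\la|-|\alpha|$ forces the filling of $\la-\alpha$ with content $\gamma$ to place one of the $1$'s of $\gamma$ at the top of the vertical strip (there are only $\tfrac{a}{2}$ row boxes available but $1+\tfrac{a}{2}$ ones to place), this is precisely the extra care the paper takes for $c^{\la}_{\alpha\gamma}$, illustrated by the tableau $T_3$ in Example \ref{ex4}. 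So the gap is a fixable bookkeeping error, but it is a real one: the parity case analysis and the asymmetric treatment of $c^{\la}_{\alpha\gamma}$ are forced, not optional, and your degenerate-case discussion ($\la=(2)$, $\la=(1^{2m})$, etc.) is subsumed once the general formulas are stated correctly.
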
 

\begin{proof}
Fix $b\in \{0, 1, 2, \dots, n-1\}$ and let $\la=(1+a, 1^b)$ be the partition such that $1+a+b$ is even. Note that for $\mathrm{SO}(2n)$, $b$ ranges only up to $n-2$ due to the constraint on the $\la_n$ being zero.

Again, one needs to construct partitions $\alpha$, $\beta$ and $\gamma$ such that $c^{\la}_{\alpha\beta}\geq 1$, $c^{\la}_{\alpha\gamma}\geq 1$ and $c^{\la}_{\beta\gamma}\geq 1$, so is $N^{\la}_{\la\la}$ as well. Since $|\la|=1+a+b$ is even, we consider two separate cases: one is when $a$ is odd and $b$ is even and the other is when $a$ is even but $b$ is odd.

First assume that $a$ is odd and $b$ is even. Choose partitions $\alpha$, $\beta$ and $\gamma$ as 
$$
\alpha=\beta=\gamma=(1+\tfrac{a-1}{2}, 1^{b/2}).
$$
Then it is easy to obtain the skew tableau of shape $\la-\alpha$ with content $\beta$, for example: Put $1$'s into the $(1+\tfrac{a-1}{2})$ boxes in the first row of the Young diagram $\beta$ and add them to right side of the $(1+\tfrac{a-1}{2})$ empty boxes in the first row of $\alpha$. Then this becomes the first row of $\la$ with $1+a$ boxes, where the half $\tfrac{1+1}{2}$ boxes are empty and the remaining half $\tfrac{1+a}{2}$ boxes have $1$'s in them.
Likewise, put each $j$, $2\leq j\leq (\tfrac{b}{2}+1)$, into the each box in the leg of $\beta$. The the leg length $\tfrac{b}{2}$ of $\alpha$ can be extended to the leg length $b$ by adding $\tfrac{b}{2}$ boxes in the leg of $\beta$, where each box contains each $j$'s. This gives the skew tableau with shape $\la-\alpha$ with content $\beta$. Therefore $c^{\la}_{\alpha\beta}\geq 1$. Similar arguments show that the same is true for $c^{\la}_{\alpha\gamma}$ and $c^{\la}_{\beta\gamma}$. Hence we complete the proof for the first case.

Next, assume that $a$ is even and $b$ is odd. Choose the partition $\alpha$, $\beta$ and $\gamma$ as
$$
\alpha=\gamma=(1+\tfrac{a}{2}, 1^{(b-1)/2}) \quad\text{and}\quad \beta=(1+(\tfrac{a}{2}-1), 1^{(b+1)/2}).
$$
Then again, for instance, the arm length $\tfrac{a}{2}$ of $\alpha$ can be extended to the arm length $a$ by adding $\tfrac{a}{2}$ boxes in the first row of the Young diagram $\beta$, where each box contains $1$s. Likewise, the leg length $\tfrac{b-1}{2}$ of $\alpha$ can be extended to the leg length $b$ by adding $\tfrac{b+1}{2}$ boxes in the leg of $\beta$, where each box contains $j$'s, $2\leq j\leq(\tfrac{b+1}{2}+1)$. This gives the skew tableau of shape $\la-\alpha$ with content $\beta$. Therefore $c^{\la}_{\alpha\beta}\geq 1$. With the exact same method, one can show $c^{\la}_{\beta\gamma}\geq 1$. In this case, we will have $(1+\tfrac{a}{2})$ boxes with $1$'s in them instead.

In order to obtain skew-tableau of shape $\la-\alpha$ with content $\gamma$, we need a little more care: As before, put $1$'s into $1+\tfrac{a}{2}$ boxes in the first row of $\gamma$ and put $j$'s into each box in the leg of $\gamma$ for all $2\leq j\leq \tfrac{b+1}{2}$. Now add only $\tfrac{a}{2}$ boxes of $\gamma$ with $1$'s in them to the first row of $\alpha$ with $1+\tfrac{a}{2}$ empty boxes. The remaining one box containing $1$ should be placed to the end of the leg of $\alpha$. After that, the $\tfrac{b-1}{2}$ boxes in the leg of $\gamma$ are added to the leg.
This give us the first row of $\la$ with $1+a$ boxes where $1+\tfrac{a}{2}$ boxes are empty and $\tfrac{a}{2}$ boxes contain $1$. Also, the leg length is $b$, where first $\tfrac{b-1}{2}$ boxes are empty, one box contains $1$, and the rest $\tfrac{b-1}{2}$ boxes contain each $j$'s, $2\leq j\leq\tfrac{b+1}{2}$. This is the skew tableau of shape $\la-\alpha$ with content $\gamma$. Hence we complete the proof for the second case as well.
\end{proof}

Now we provide two examples to explain the proof above. Example \ref{ex3} is the first case and Example \ref{ex4}  is the second case of Proposition \ref{prop-hook}, respectively.

\begin{example}\label{ex3}
Let $\la=(1+5, 1^4)$ and let
$
\alpha=\beta=\gamma=(1+2, 1^2).
$
The following Young diagrams explain how to obtain the skew tableau of shape $\la-\alpha$ with content $\beta$ in the proof of Proposition \ref{prop-hook}:
$$
\ytableausetup{aligntableaux=top}
\alpha=\ydiagram[*(yellow)]{3,1,1}\quad``+"\quad\beta=\begin{ytableau} 1&1&1\\2\\3\end{ytableau}\quad\leadsto\quad 
T=\ytableaushort
{\none \none \none 1 1 1, \none,\none, 2, 3}
*{6,1,1,1,1}
*[*(yellow)]{3,1,1}
$$
Then $w(T)=(1,1,1,2,3)$ and $T$ is clearly a Littlewood-Richardson tableau.
\end{example}
\begin{example}\label{ex4}
Let $\la=(1+4, 1^3)$ and let
$$
\alpha=\gamma=(1+2, 1), \quad\text{and}\quad \beta=(1+1, 1^2).
$$
The following Young diagrams again explain how to obtain the skew tableau of shape $\la-\alpha$ with content $\beta$ in the proof of Proposition \ref{prop-hook} :
$$
\ytableausetup{aligntableaux=top}
\alpha=\ydiagram[*(yellow)]{3,1}\quad``+"\quad\beta=\begin{ytableau} 1&1\\2\\3\end{ytableau}\quad\leadsto \quad
T_1=\ytableaushort
{\none \none \none 1 1, \none, 2, 3}
*{5,1,1,1}
*[*(yellow)]{3,1}
$$
Then $w(T_1)=(1,1,2,3)$ and $T_1$ is clearly a Littlewood-Richardson tableau.

Similarly, the following Young diagrams explain how to obtain the skew tableau of shape $\la-\beta$ with content $\gamma$ in the proof of Proposition \ref{prop-hook} :
$$
\ytableausetup{aligntableaux=top}
\beta=\ydiagram[*(yellow)]{2,1,1}\quad``+"\quad\gamma=\begin{ytableau} 1&1&1\\2\end{ytableau}\quad\leadsto \quad
T_2=\ytableaushort
{\none \none 1 1 1, \none, \none, 2}
*{5,1,1,1}
*[*(yellow)]{2,1,1}
$$
Then $w(T_2)=(1,1,1,2)$ and $T_2$ becomes again a Littlewood-Richardson tableau.

The last case explains how to obtain the skew tableau of shape $\la-\alpha$ with content $\gamma$ in  the proof of Proposition \ref{prop-hook} :
$$
\ytableausetup{aligntableaux=top}
\alpha=\ydiagram[*(yellow)]{3,1}\quad``+"\quad\gamma=\begin{ytableau} 1&1&1\\2\end{ytableau}\quad\leadsto \quad
T_3=\ytableaushort
{\none \none \none 1 1, \none, 1, 2}
*{5,1,1,1}
*[*(yellow)]{3,1}
$$
Then $w(T_3)=(1,1,1,2)$ and $T_3$ is clearly a Littlewood-Richardson tableau.
\end{example}

\subsection{Rectangular partitions}\label{rec}

We say that $\la$ is a rectangular partition of size $m\times k$ if its Young diagram has exactly $m$ rows and each row has exactly $k$ boxes. In this section, we examine the size $m\times k$ rectangular partition $\la$, where $m\leq n$. Note that $|\la|=mk$ and $\ell(\la)=m$.  
 
\begin{prop}\label{prop-rec}
Let $\la$ be a rectangular partition of an even number. Then the representation $\otimes^3$ detects $\Sla(G)$.
\end{prop}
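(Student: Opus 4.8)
By Proposition \ref{key}, the representation $\otimes^3$ detects $\Sla(G)$ exactly when the Newell--Littlewood coefficient $N^{\la}_{\la\la}$ of \eqref{GenMul} is positive, and as in the previous subsections it suffices to produce partitions $\alpha,\beta,\gamma$ with at most $n$ parts for which all three Littlewood--Richardson coefficients $c^{\la}_{\alpha\beta}$, $c^{\la}_{\alpha\gamma}$, $c^{\la}_{\beta\gamma}$ are at least $1$; then $N^{\la}_{\la\la}\geq c^{\la}_{\alpha\beta}c^{\la}_{\alpha\gamma}c^{\la}_{\beta\gamma}\geq 1$ since every term of \eqref{GenMul} is nonnegative. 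Write $\la=(k^m)$ for the rectangular partition of size $m\times k$ with $\ell(\la)=m\leq n$ (and $m\leq n-1$ when $G=\mathrm{SO}(2n)$, by (A2)). Since $|\la|=mk$ is even, at least one of $m$, $k$ is even, and the plan is to split $\la$ "vertically" or "horizontally" accordingly.

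If $k$ is even, then every part of $\la$ equals the even number $k$, so $\la$ satisfies case (1) of Theorem \ref{thm-even} and Proposition \ref{prop-even} applies verbatim. Concretely one takes $\alpha=\beta=\gamma=\bigl((\tfrac{k}{2})^m\bigr)$: the skew shape $\la-\alpha$ is then the right half of each row, and filling the $i$th such row with the entry $i$ produces a Littlewood--Richardson tableau of content $\bigl((\tfrac{k}{2})^m\bigr)$, exactly as in the proof of Proposition \ref{prop-even}.

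If $k$ is odd, then $m$ is even (and $m\geq 2$), and here I split $\la$ horizontally: set
$$
\alpha=\beta=\gamma=\bigl(k^{m/2}\bigr),
$$
the rectangle of size $\tfrac{m}{2}\times k$, which has $\tfrac{m}{2}\leq n$ parts and $|\alpha|=\tfrac{|\la|}{2}$. The skew shape $\la-\alpha$ is precisely the bottom $\tfrac{m}{2}$ rows of $\la$, each a full row of $k$ boxes; for $1\leq i\leq\tfrac{m}{2}$ I put the entry $i$ in every box of row $\tfrac{m}{2}+i$. This is a Littlewood--Richardson tableau of content $\beta$: the rows are constant (weakly increasing), each column reads $1,2,\dots,\tfrac{m}{2}$ from top to bottom (strictly increasing), and the word $w=1^k2^k\cdots(\tfrac{m}{2})^k$ satisfies the lattice condition because all $j$'s precede all $(j+1)$'s; its content is $(k^{m/2})=\beta$. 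Hence $c^{\la}_{\alpha\beta}\geq 1$, and since $\alpha=\beta=\gamma$ the same construction gives $c^{\la}_{\alpha\gamma}\geq 1$ and $c^{\la}_{\beta\gamma}\geq 1$, whence $N^{\la}_{\la\la}\geq 1$ and the proof is complete.

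I do not expect any serious obstacle here. The only points needing (minor) care are the parity bookkeeping — noting that $mk$ even forces one of $m$, $k$ even and choosing the vertical versus horizontal halving accordingly — and checking that the chosen $\alpha,\beta,\gamma$ have at most $n$ parts so that the identifications of \S\ref{prelim}, including the constraint (A2) for $\mathrm{SO}(2n)$, genuinely apply; both are immediate since $\alpha,\beta,\gamma$ have at most $m\leq n$ parts. If desired, an explicit $m\times k$ example (e.g. $\la=(3^2)$) can be appended, parallel to Examples \ref{ex1}--\ref{ex4}.
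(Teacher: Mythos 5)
Your proof is correct, and in the $m$-even case it takes a genuinely different (though closely related) route from the paper. The $k$-even case is identical in both: reduce to Proposition \ref{prop-even}. For $k$ odd and $m$ even, the paper passes to the conjugate partition $\la'=(m^k)$, which again has all parts even, applies Proposition \ref{prop-even} to $\la'$, and transfers positivity back via the symmetry $c^{\la}_{\mu\nu}=c^{\la'}_{\mu'\nu'}$, citing \cite{BSS}. You instead split the rectangle horizontally, taking $\alpha=\beta=\gamma=(k^{m/2})$ and exhibiting the Littlewood--Richardson tableau directly; your tableau is precisely the conjugate of the one the paper's argument produces implicitly. What your route buys is self-containment: you never need the conjugation symmetry of Littlewood--Richardson coefficients, and the three conditions (constant rows, columns reading $1,2,\dots,m/2$, and the word $1^k2^k\cdots(m/2)^k$ trivially satisfying the lattice condition) are checked in one line. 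What the paper's route buys is brevity, since it reuses Proposition \ref{prop-even} wholesale. One minor remark: your check that $\alpha,\beta,\gamma$ have at most $n$ parts is not actually needed --- in \eqref{GenMul} the sum ranges over all partitions $\alpha,\beta,\gamma$ without restriction; only the output partition $\nu$ in \eqref{GenLR} is constrained to have $\ell(\nu)\leq n$ --- but the check is of course harmless.
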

 
\begin{proof}
Let $\la$ be the rectangular partition of size $m\times k$ such that $|\la|=mk$ is even and $m\leq n$.

First assume that $k$ is even. Then it coincides with the case considered in Proposition \ref{prop-even}. Therefore one knows how to construct partitions $\alpha$, $\beta$ and $\gamma$ given by the process of the proof of Proposition \ref{prop-even} so that $c^{\la}_{\alpha\beta}\geq 1$, $c^{\la}_{\alpha\gamma}\geq 1$ and $c^{\la}_{\beta\gamma}\geq 1$ which in turn imply $N^{\la}_{\la\la}\geq 1$. This completes the proof in this case.

Next, assume that $m$ is even. Consider the conjugate partition $\la'$ of $\la$. Then $\la'$ becomes again the partition considered in Proposition \ref{prop-even}. Hence, for such a partition $\la'$, one can construct partitions $\alpha$, $\beta$ and $\gamma$ by the process of the proof of Proposition \ref{prop-even} so that $c^{\la'}_{\alpha\beta}\geq 1$, $c^{\la'}_{\alpha\gamma}\geq 1$ and $c^{\la'}_{\beta\gamma}\geq 1$. On the other hand, it is known that
$$
c^{\la}_{\mu\nu}=c^{\la'}_{\mu'\nu'}
$$see \cite{BSS} for example. Therefore, we can choose the corresponding conjugate partitions  $\alpha'$, $\beta'$ and $\gamma'$ such that $c^{\la}_{\alpha'\beta'}\geq 1$, $c^{\la}_{\alpha'\gamma'}\geq 1$ and $c^{\la}_{\beta'\gamma'}\geq 1$.  This implies the desired result.
\end{proof}

\section*{Acknowledgements}

The author is grateful to Leslie Saper for answering various questions on representation theory. The author also thanks to J. R. Getz for his constant support throughout this project and help with editing of the paper.



\begin{thebibliography}{}

\bibitem[An]{Andrews}
G. E. Andrews, \textbf{Number Theory}, Dover Publ., NY, 1971.


\bibitem[Ar]{Arthur}
J. Arthur, \textbf{The Endoscopic Classification of Representations: Orthogonal and Symplectic Groups}, Amer. Math. Soc.  Colloquium Publ., \textbf{61}, 2013.


\bibitem[BSS]{BSS}
G. Benkart, F. Sottile, J. Stroomer, \emph{Tableau switching: algorithms and applications}, J. Combin. Theory Ser. A \textbf{76} 91996), 11--34.


\bibitem[CKPSS]{CKPSS}
J. Cogdell, H. Kim, I. Piatetski-Shapiro, and F. Shahidi, \emph{Functoriality for classical groups}, Publ. Math. Inst. Hautes \'Etudes Sci. \textbf{99} (204), 163--233.



\bibitem[F]{Fulton}
W. Fulton, \emph{Young Tableaux}, London Math. Soc. Student Text \textbf{35}, Cambridge Univ. Press, Cambridge, 1997

\bibitem[FH]{FH}
W. Fulton and J. Harris, \emph{Representation theory}, Springer-Verlag, New York, 1991.



\bibitem[GK]{GK}
J. R. Getz and J. Klassen, \emph{Isolating Rankin-Selberg lifts}, Proc. Amer. Math. Soc., \textbf{143}, no. 8 (2015), 3319--3329.

\bibitem[GRS]{GRS}
 D. Ginzburg, S. Rallis and D. Soundry, \emph{Generic automorphic forms on $\mathrm{SO}(2n+1)$: Functorial lift to $\GL(2n)$}, \textbf{27} (2012), 143--211.

\bibitem[H]{Hahn}
H. Hahn, \emph{On tensor third $L$-functions of automorphic representations of $\GL_n(\A_F)$}, submitted for publication (arXiv:1509.01863).

\bibitem[HL]{HL}
R. Howe and S. T. Lee, \emph{Why should the Littlewood-Richardson rule be true?}, Bull. Amer. Math. Soc., \textbf{49}, no. 2 (2012), 187--236.

\bibitem[K]{King}
R. C. King, \emph{Modification rules and product of irreducible representations of the unitary, orthogonal and symplectic groups}, J. Math. Phys. \textbf{12} (1971), 1588--1598.


\bibitem[KT]{KT}
K. Koike and I. Terada, \emph{Young-diagrammatic methods for the representation theory of the classical groups of type $B_n$, $C_n$, $D_n$} J. Algebra \textbf{107}, No. 2, (1987), 466--511.

\bibitem[L1]{Langlands_conj}
R. P. Langlands, \emph{Letter to Andr\'e Weil}, 1967.

\bibitem[L2]{Langlands_beyond}
R. P. Langlands, \emph{Beyond endoscopy}, in \textbf{Contributions to Automorphic Forms, geometry, and Number Theory: A Volume in Honor of Joseph Shalika}, Johns Hopkins Univ. Press, 2004.





\bibitem[M]{Milne}
J. S. Milne, \emph{Algebraic Groups: An introduction to the theory of algebraic group schemes over fields}, www.jmilne.org/math/.




\bibitem[Se]{Seitz}
G. M. Seitz, \emph{Topics in the theory of algebraic groups}, in \textbf{Group Representation Theory}, edited by M. Geck, D. Testerman and J. Th\'evenaz, EPFL Press, 2007.


\bibitem[SZ]{SZ}
M. Shimozono and M. Zabrocki, \emph{Deformed universal characters for classical and affine algebras and the $X+M+K$ conjecture}, Formal power series and algebraic combinatorics, Vancouver 2004.





\end{thebibliography}
\end{document}